\theoremstyle{plain}
\newtheorem{theorem}{Theorem}[section]
\newtheorem{lemma}[theorem]{Lemma}
\newtheorem{corollary}[theorem]{Corollary}
\newtheorem{propn}[theorem]{Proposition}
\newtheorem{conj}[theorem]{Conjecture}
\theoremstyle{definition}
\newtheorem{eg}[theorem]{Example}
\newcommand*{\field}[1]{\mathbb{#1}}
\title{Newform Eisenstein congruences of local origin}
\author{Dan Fretwell, Jenny Roberts}
\date{}
\begin{document}

\maketitle

\begin{center}\textit{In memory of Lynne Walling.}\end{center}

\begin{abstract}
We give a general conjecture concerning the existence of Eisenstein congruences between weight $k\geq 3$ newforms of square-free level $NM$ and weight $k$ new Eisenstein series of square-free level $N$. Our conjecture allows the forms to have arbitrary character $\chi$ of conductor $N$. The special cases $M=1$ and $M=p$ prime are fully proved, with partial results given in general. We also consider the relation with the Bloch-Kato conjecture, and finish with computational examples demonstrating cases of our conjecture that have resisted proof.
\end{abstract}

\section{Introduction}

The theory of Eisenstein congruences has a rich and beautiful history, beginning with Ramanujan's remarkable observation that the Fourier coefficients $\tau(n)$ of the discriminant function: \[\Delta(z) = q\prod_n(1-q^n)^{24} = \sum_{n\geq 1}\tau(n)q^n \in S_{12}(\text{SL}_2(\mathbb{Z}))\] satisfy $\tau(n)\equiv \sigma_{11}(n) \bmod 691$ for all $n\geq 1$ (here $\sigma_{11}(n) = \sum_{d\mid n}d^{11}$). Intuitively, this family of congruences is explained via a congruence between two modular forms of weight $12$; the cusp form $\Delta$ and the Eisenstein series $E_{12}$. The significance of the modulus $691$ is that it divides the numerator of $-\frac{B_{12}}{24}$, the constant term of $E_{12}$ (so that $E_{12}$ is a cusp form mod $691$).

Since Ramanujan, many other congruences have been found between cusp forms and Eisenstein series, modulo other interesting primes. For example, one can vary the weight of the forms and find congruences whose moduli divide the numerators of other Bernoulli numbers. The existence of such congruences has been a key tool in the proofs of various important results in algebraic number theory, e.g. the Herbrand-Ribet Theorem, relating the $p$-divisibility of Bernoulli numbers with the Galois module structure of $\text{Cl}(\mathbb{Q}(\zeta_p))[p]$. (see \cite{ribet_1976}).

Similarly, varying the levels and characters of our forms produces even more congruences. This time the moduli are observed to divide numerators of generalised Bernoulli numbers and special values of local Euler factors of Dirichlet $L$-functions. In the latter case such congruences are often referred to as being of ``local origin". The papers, \cite{mazur},\cite{dummigan_2007}, \cite{dummigan_fretwell_2014}, \cite{billerey_menares_2016}, \cite{billerey_menares_2017} and \cite{spencer_2018} contain thorough discussions of such congruences. The existence of these congruences is linked to special cases of the Bloch-Kato Conjecture, a far reaching generalisation of the Herbrand-Ribet Theorem, the Analytic Class Number Formula and the Birch Swinnerton-Dyer Conjecture. This conjecture implies links between $p$-divisibility of special values of certain motivic $L$-functions and $p$-torsion in certain Bloch-Kato Selmer groups.

More generally, the theory of Eisenstein congruences has been extended to various families of automorphic form, although the landscape is still highly conjectural \cite{harder}. Roughly speaking, if $G/\mathbb{Q}$ is a reductive group then one expects to observe congruences between Hecke eigenvalues coming from cuspidal automorphic representations for $G(\mathbb{A}_{\mathbb{Q}})$ and automorphic representations that are parabolically induced from Levi subgroups of $G$. The moduli of such congruences are also predicted to arise from special values of certain motivic $L$-functions (related to the particular Levi subgroup considered), in direct comparison with the Bloch-Kato conjecture. For detailed discussion of results and conjectures in this direction, see \cite{bergstrom}. In the special case of $\text{GL}_2/\mathbb{Q}$ we recover exactly the Eisenstein congruences mentioned above, the relevant $L$-functions being Dirichlet $L$-functions (possibly incomplete, hence the appearance of local Euler factors). It is expected that proving such congruences will provide key insights into high rank cases of the Bloch-Kato Conjecture.

In this paper we will return to the ``classical" case of $\text{GL}_2$ Eisenstein congruences, but focus instead on the existence of newforms that satisfy such congruences. Progress has been made on this question in the case of trivial character (\cite{billerey_menares_2016}, \cite{dummigan_fretwell_2014}), and so we consider the more general case of forms with arbitrary character of square-free conductor. To do this, we discuss the following general conjecture and provide a full proof in the special cases of $M=1$ and $M=p$ prime. 

\begin{conj}
Let $N,M\geq 1$ be square-free, $k > 2$ and $l>k+1$ be a prime satisfying $l \nmid \varphi(N)NM$. Let $\psi,\phi$ be Dirichlet characters of conductors $u,v\geq 1$, satisfying $N = uv$, and set $\chi = \psi\phi$ (with lift $\tilde{\chi}$ to modulus $NM$). There exists a newform $f \in S_k^{\text{new}}(\Gamma_0(NM), \tilde{\chi})$ and a prime $\lambda\mid l$ of $\mathcal{O}_f[\phi,\psi]$ such that 
\begin{equation*}
    a_q(f) \equiv \psi(q)+\phi(q)q^{k-1} \ \text{mod } \lambda
\end{equation*}
for all primes $q\nmid NM$ if and only if both of the following conditions hold for some $\lambda'|l$ in $\mathbb{Z}[\psi,\phi]$ (satisfying $\lambda|\lambda'$):
\begin{enumerate}
\item $\text{ord}_{\lambda'}(L(1-k, \psi^{-1}\phi) \prod_{p \in \mathcal{P}_M} (\psi(p) - \phi(p)p^k)) > 0$.
\item $\text{ord}_{\lambda'}((\psi(p)-\phi(p)p^k)(\psi(p)-\phi(p)p^{k-2}))>0$ for each prime $p\in \mathcal{P}_M$.
\end{enumerate}
\end{conj}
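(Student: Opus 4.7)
The plan is to establish both implications, with the forward direction (existence of the newform assuming (1) and (2)) being the main task.

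\textit{Sufficiency.} Starting from the classical Eisenstein series $E=E_{k,\psi,\phi}\in M_k(\Gamma_0(N),\chi)$---with $n$-th Fourier coefficient $\sigma_{k-1,\psi,\phi}(n):=\sum_{d\mid n}\psi(n/d)\phi(d)d^{k-1}$ and constant term proportional to $L(1-k,\psi^{-1}\phi)$---I would form a level-$NM$ Eisenstein object $\mathcal{E}$ as a specific $\mathbb{Z}[\psi,\phi]$-linear combination of the pullbacks $E(dz)$, $d\mid M$, chosen so that $\mathcal{E}$ is a simultaneous eigenform for every $U_p$, $p\in\mathcal{P}_M$, with prescribed eigenvalue. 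An explicit computation of the $q$-expansion of $\mathcal{E}$ at the cusps of $X_0(NM)$ should identify the constant term (up to a $\lambda'$-unit) with $L(1-k,\psi^{-1}\phi)\prod_{p\in\mathcal{P}_M}(\psi(p)-\phi(p)p^k)$; the other cusps can be handled using Atkin--Lehner involutions and produce constant terms of the same form up to units. Condition~(1) then forces $\mathcal{E}\equiv 0\pmod{\lambda'}$ at every cusp, so its reduction is a cuspidal Hecke eigenform. The Deligne--Serre lemma produces a characteristic-zero Hecke eigenform $g\in S_k(\Gamma_0(NM),\tilde{\chi})$ with $a_q(g)\equiv\psi(q)+\phi(q)q^{k-1}\pmod{\lambda}$ for all $q\nmid NM$. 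Finally, decompose $g$ into newform constituents at levels $NM'$ with $M'\mid M$ and invoke condition~(2) to show that one constituent must have $M'=M$.

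\textit{Necessity.} Let $\bar{\rho}:=\bar{\rho}_{f,\lambda}$ be the residual Galois representation of $f$. By the congruence at $q\nmid NM$ and Chebotarev, $\bar{\rho}^{\mathrm{ss}}\cong\psi\oplus\phi\chi_\ell^{k-1}$. Fix $p\in\mathcal{P}_M$: since $f$ is new at $p$ with $\tilde{\chi}$ unramified there, $\rho_{f,\lambda}|_{D_p}$ is an unramified twist of Steinberg, so $a_p(f)^2=\chi(p)p^{k-2}$. The hypothesis $l>k+1$ forces $\bar{\rho}|_{D_p}$ to be semisimple, so $\bar{\rho}(\mathrm{Frob}_p)$ has eigenvalues $\{\psi(p),\phi(p)p^{k-1}\}$ modulo $\lambda'$. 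Matching $a_p(f)$ (the eigenvalue on inertia-invariants) to one of these, together with $a_p(f)^2\equiv\psi(p)\phi(p)p^{k-2}$, yields either $\psi(p)\equiv\phi(p)p^{k-2}$ or $\psi(p)\equiv\phi(p)p^{k}$ modulo $\lambda'$, which is condition~(2). Condition~(1) then follows from the Eisenstein-ideal perspective: the existence of the congruence between $f$ and $\mathcal{E}$ forces divisibility of the constant term of $\mathcal{E}$ by $\lambda'$.

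\textit{Main obstacle.} The decisive step is extracting a newform from the cusp form $g$. When $\mathcal{P}_M=\emptyset$ (i.e., $M=1$) this step is vacuous, and when $|\mathcal{P}_M|=1$ (i.e., $M=p$) one can argue directly on the two-dimensional $p$-old subspace using condition~(2). For $|\mathcal{P}_M|\geq 2$ one must simultaneously ensure newness at each $p\in\mathcal{P}_M$ while controlling the Eisenstein congruence across the old/new decomposition at several primes; this combinatorial obstruction, and the corresponding difficulty in organising condition~(2) across different primes, is presumably why the paper proves only the cases $M=1$ and $M=p$ prime in full.
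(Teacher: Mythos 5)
Your overall strategy is the same as the paper's: in the forward direction, build a level-$NM$ Eisenstein lift $E_{\underline{\delta}}$ of $E_k^{\psi,\phi}$ that is a $U_p$-eigenform for $p\in\mathcal{P}_M$ (the paper takes $\delta_p=\psi(p)$), show Condition~(1) forces it to be cuspidal mod $\lambda'$, apply Deligne--Serre, and then try to promote the resulting eigenform to a newform; and you correctly identify that this last step is the known obstruction for $|\mathcal{P}_M|\geq 2$. For $M=p$ your ``argue directly on the $p$-old subspace'' is essentially Diamond's level-raising theorem, which is what the paper cites. In the reverse direction your derivation of Condition~(2) via the Atkin--Lehner relation $a_p(f)^2=\chi(p)p^{k-2}$ together with the composition-factor match is a clean and valid variant of the paper's comparison using the explicit local types from Loeffler--Weinstein.

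Two points need repair, however. First, $l>k+1$ does \emph{not} force $\overline{\rho}_{f,\lambda}|_{D_p}$ to be semisimple; the Steinberg extension can remain non-split after reduction. What you actually need (and what the paper uses) is only the equality of \emph{semisimplifications}, i.e.\ the multiset of Jordan--H\"older factors, which is independent of the choice of lattice, so the argument survives but the phrasing is wrong. Second, your treatment of Condition~(1) in the necessity direction is too thin: ``the Eisenstein-ideal perspective forces divisibility of the constant term'' hides the real work. You must (a) choose the \emph{right} $E_{\underline{\delta}}$ ($\delta_p=\phi(p)p^{k-1}$, which is forced by Case (B) because then $a_p(f)\equiv\psi(p)=\varepsilon_p$), (b) upgrade the hypothesis $a_q(f)\equiv a_q(E_{\underline{\delta}})$ for $q\nmid NM$ to a congruence at \emph{all} primes $q\neq l$ using local composition factors at $q\in\mathcal{P}_{NM}$, (c) pass from a congruence of Hecke eigenvalues to a congruence of $q$-expansions for all $n$ (including $l\mid n$) via the theta operator $\Theta=q\,\frac{d}{dq}$ and Katz's injectivity for $l>k+1$, and only then (d) conclude $E_{\underline{\delta}}$ vanishes mod $\lambda$ at all cusps and read off Condition~(1) from the explicit constant-term formula. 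You should also note that Condition~(1) is immediate when some $p\in\mathcal{P}_M$ satisfies Case~(A), so the Eisenstein argument is only needed when every $p$ satisfies Case~(B).
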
 

\noindent Here, $\mathcal{P}_M$ is the set of primes divisors of $M$, $a_n(f)$ is the $n$th Fourier coefficient of $f$ and $\mathcal{O}_f[\psi,\phi]$ is the ring of integers of the smallest extension of $K_f = \mathbb{Q}(\{a_n(f)\})$ containing the values of $\psi$ and $\phi$ (similarly for $\mathbb{Z}[\psi,\phi]$).

After, we consider the natural relationship between these newform congruences and the Bloch-Kato conjecture, and give computational evidence for our conjecture in cases where it is not known.

\textbf{Acknowledgements} We thank Neil Dummigan for useful discussions concerning links between Conjecture $1.1$ and the Bloch-Kato conjecture, and for helpful comments and suggestions for improvement. We also thank an anonymous referee for valuable comments.

This work forms part of the thesis of the second named author, and we are grateful to the Heilbronn Institute for support via their PhD Studentship program. 

Finally, we wish to dedicate this work to the memory of Lynne Walling, a close friend, mentor and collaborator of the first author, and who would have been a PhD supervisor of the second. Lynne was a highly valued member of the Mathematics community, whose encouragement and support towards early career mathematicians and those from under-represented groups was second to none.

\section{Background and notation}
\subsection{The Setup}
We recap the background knowledge of modular forms that we will need, and refer the reader to \cite{diamond_shurman_2016} for further definitions and discussions. For an integer $N \geq 1$, define the standard congruence subgroups of $SL_2(\field{Z})$:
\begin{equation*}
    \Gamma_1(N)=\left\{ \begin{pmatrix} a & b \\ c & d \end{pmatrix} \in SL_2(\field{Z}) : \begin{pmatrix} a & b \\ c & d \end{pmatrix} \equiv \begin{pmatrix} 1 & * \\ 0 & 1 \end{pmatrix} \ (\text{mod} \ N)  \right\}
\end{equation*}
\begin{equation*}
    \Gamma_0(N)=\left\{ \begin{pmatrix} a & b \\ c & d \end{pmatrix} \in SL_2(\field{Z}) : \begin{pmatrix} a & b \\ c & d \end{pmatrix} \equiv \begin{pmatrix} * & * \\ 0 & * \end{pmatrix} \ (\text{mod} \ N)  \right\}.
\end{equation*}

Let $M_k(\Gamma_0(N), \chi)$ be the space of modular forms of weight $k \geq 2$, level $N$ and Dirichlet character $\chi : (\field{Z}/N\field{Z})^\times \rightarrow \field{C}^\times$. This is the space of holomorphic functions $f:\mathcal{H}\rightarrow\mathbb{C}$ on the upper half plane $\mathcal{H}$ that  satisfy:
\begin{equation*}
    f[\gamma]_k:=(cz + d)^{-k} f\left(\frac{az+b}{cz+d} \right) = \chi(d)f(z)
\end{equation*}
for all $\gamma=\begin{pmatrix} a & b \\ c & d \end{pmatrix} \in \Gamma_0(N) $, and are such that $f[\alpha]_k$ is holomorphic at $i\infty$ for all $\alpha \in SL_2(\field{Z})$ (i.e.\ the Fourier expansion of $f[\alpha]_k$ is of the form $f[\alpha]_k(z) = \sum_{n=0}^\infty a_n(f) q^n$, with $q=e^{2\pi i z}$). Note that the weight must satisfy $\chi(-1)=(-1)^k$, since $\begin{pmatrix} -1 & 0 \\ 0 & -1 \end{pmatrix}\in\Gamma_0(N)$.

The subspace $S_k(\Gamma_0(N), \chi)$ of cusp forms consists of the forms such that $f[\alpha]_k$ has Fourier coefficient $a_0(f)=0$ for each $\alpha\in SL_2(\field{Z})$. The orthogonal complement of $S_k(\Gamma_0(N),\chi)$ with respect to the Petersson inner product is the Eisenstein subspace $\mathcal{E}_k(\Gamma_0(N), \chi)$. If $k>2$ (the case of interest to us) then a natural basis of this space consists of the normalised Eisenstein series $E_k^{\psi, \phi}(tz)$ for all ordered pairs of Dirichlet characters $\phi,\psi$ of conductors $u,v$ satisfying $\psi\phi = \chi$ and $tuv\mid N$. The Fourier expansion of $ E_k^{\psi,\phi}$ is:
\begin{equation*}
    E_k^{\psi,\phi}(z)=\delta(\psi)\frac{L(1-k, \psi^{-1}\phi)}{2} + \sum_{n=1}^\infty \sigma_{k-1}^{\psi, \phi}(n) q^n,.
\end{equation*}
where $\delta(\psi)=\delta_{\psi,\mathds{1}_1}$ (the trivial character modulo $1$), $L(1-k,\psi^{-1}\phi)\in\mathbb{Q}(\psi,\phi)$ is a special value of the Dirichlet L-function attached to $\psi^{-1}\phi$ and
\begin{equation*}
   \sigma_{k-1}^{\psi,\phi}(n):=\sum_{d\mid n, d>0} \psi(n/d) \phi(d) d^{k-1}
\end{equation*}
is a generalised power divisor sum. The Eisenstein series with $uv = N$ are referred to as being new at level $N$. 

For any given level $N$, we can also decompose $S_k(\Gamma_1(N))$ into new and old subspaces. For any $d\mid N$ we have the map:
\begin{align*}
    i_d: S_k(\Gamma_1(N/d))^2 &\rightarrow S_k(\Gamma_1(N)) \\
    (f,g) &\mapsto f + g[\alpha_d]_k
\end{align*}

\noindent where $\alpha_d = \left(\begin{smallmatrix} d & 0 \\ 0 & 1\end{smallmatrix}\right)$. Then the old subspace is:
\begin{equation*}
 S_k^{\text{old}}(\Gamma_1(N)) = \sum_{p \in \mathcal{P}_N} i_p(S_k(\Gamma_1(N/p))^2),
\end{equation*} where $\mathcal{P}_N=\{p \text{ prime}: p \mid N \}$. The new subspace $S_k^{\text{new}}(\Gamma_1(N))$ is then the orthogonal complement of $S_k^{\text{old}}(\Gamma_1(N))$ with respect to the Petersson inner product, so that 
\begin{equation*}
   S_k(\Gamma_1(N)) = S_k^{\text{old}}(\Gamma_1(N))\oplus S_k^{\text{new}}(\Gamma_1(N)). 
\end{equation*} This induces a decomposition of the space $S_k(\Gamma_0(N),\chi)$ into new and old spaces (lifting from spaces $S_k(\Gamma_0(N/p),\chi')$ with $p\in \mathcal{P}_N$ such that $p\text{ cond}(\chi)\mid N$, and taking $\chi'$ to be the reduction of $\chi$ mod $N/p$).

The space $M_k(\Gamma_0(N),\chi)$ comes equipped with the action of a Hecke algebra, a commutative algebra generated by operators $T_p$ indexed by primes $p$. The action of $T_p$ on the level of Fourier coefficients is as follows: 
\begin{equation*}
    a_n(T_p(f)) = a_{np}(f) + \chi(p)p^{k-1}a_{n/p}(f),
\end{equation*}
where we take $a_{n/p}(f)=0$ if $n/p \notin \field{Z}$. 

The space $S_k(\Gamma_0(N),\chi)$ has a basis of eigenforms for the operators $T_p$ for all $p\nmid N$. We can always normalise an eigenform $f$ so that $a_1(f) = 1$, and in this case $T_p(f) = a_p(f)f$ for each prime $p\nmid N$. For the subspace $S_k^{\text{new}}(\Gamma_0(N),\chi)$ we can find a basis of newforms, eigenforms for the operators $T_p$ for all $p$.

The action of the Hecke algebra on the Eisenstein subspace $\mathcal{E}_k(\Gamma_0(N),\chi)$ is also well understood (see \cite[Proposition 5.2.3]{diamond_shurman_2016}). In particular, the normalised Eisenstein series $E_k^{\psi,\phi}$ are eigenforms for all Hecke operators $T_p$ with $p\nmid N$, with eigenvalues given by:

\begin{equation*}
    T_p(E_k^{\psi,\phi}) = \sigma_{k-1}^{\psi,\phi}(p)E_k^{\psi,\phi} = (\psi(p)+\phi(p)p^{k-1})E_k^{\psi,\phi}.
\end{equation*} If $E_k^{\psi,\phi}$ is new at level $N$ then it is an eigenform for the full Hecke algebra and so the above holds for all $p$.

The field of definition $K_f = \mathbb{Q}(\{a_n(f)\})$ of an eigenform $f\in S_k(\Gamma_0(N),\chi)$ is known to be a number field, and so has a well defined ring of integers $\mathcal{O}_f$. We will often denote by $K_f[\psi,\phi]$ the finite extension generated by the values of $\psi,\phi$ (i.e. roots of unity), and denote by $\mathcal{O}_f[\psi,\phi]$ be the corresponding ring of integers.

By a theorem of Deligne \cite{deligne_1969}, for each prime $\Lambda$ of $\mathcal{O}_f$ there exists a continuous, irreducible $\Lambda$-adic Galois representation \[\rho_{f,\Lambda}: G_{\mathbb{Q}} \rightarrow \text{GL}_2(K_{f,\Lambda})\] which is unramified for $q \nmid NMl$ and satisfies
\begin{align*}
    \text{Tr}(\rho_{f,\Lambda}(\text{Frob}_q)) &=a_q(f) \\ \text{det}(\rho_{f,\Lambda}(\text{Frob}_q)) &=\chi(q)q^{k-1}.
\end{align*} for such primes (here $\text{Frob}_q$ is an arithmetic Frobenius element at $q$). The above properties uniquely determine $\rho_{f,\Lambda}$.
By standard arguments, it is possible to conjugate $\rho_{f,\Lambda}$ so that it takes values in $\text{GL}_2(\mathcal{O}_{f,\Lambda})$ and reduce modulo $\Lambda$ to get a continuous representation \[\overline{\rho}_{f,\Lambda}:G_{\mathbb{Q}} \rightarrow \text{GL}_2(\mathbb{F}_{\Lambda}).\] In general, the reduction depends on the choice of invariant $\mathcal{O}_{f,\Lambda}$-lattice, but the irreducible composition factors are independent of this choice.

\section{A General Conjecture}
\label{section:conj}
From now on, $N, M\geq 1$ are fixed coprime squarefree integers and $\chi$ is a fixed character of conductor $N$. Suppose further that $\phi, \psi$ are characters of conductors $u, v\geq 1$ respectively satisfying $uv=N$, and set $\psi \phi = \chi$. Then $E_k^{\psi,\phi}\in \mathcal{E}_k(\Gamma_0(N),\chi)$ is new at level $N$. We define $\tilde{\chi}$ to be the mod $NM$ Dirichlet character such that $\tilde{\chi}(x) = \chi(x \bmod N)$ for all $x\in(\mathbb{Z}/NM\mathbb{Z})^{\times}$.

The following is a restatement of Conjecture $1.1$. It is a general conjecture concerning Eisenstein congruences between newforms in $S_k^{\text{new}}(\Gamma_0(NM),\tilde{\chi})$ and $E_k^{\psi,\phi}$, providing a wide generalisation of Ramanujan's congruence (as defined earlier, $\mathcal{P}_d$ is the set of prime divisors of $d\geq 1$).

\begin{conj}
\label{conj:general}
Let $k > 2$ and $l>k+1$ be a prime satisfying $l \nmid \varphi(N)NM$. There exists a newform $f \in S_k^{\text{new}}(\Gamma_0(NM), \tilde{\chi})$ and a prime $\lambda\mid l$ of $\mathcal{O}_f[\phi,\psi]$ such that 
\begin{equation*}
    a_q(f) \equiv \psi(q)+\phi(q)q^{k-1} \ \text{mod } \lambda
\end{equation*}
for all primes $q\nmid NM$ if and only if both of the following conditions hold for some $\lambda'|l$ in $\mathbb{Z}[\psi,\phi]$ (satisfying $\lambda|\lambda'$):
\begin{enumerate}
\item $\text{ord}_{\lambda'}(L(1-k, \psi^{-1}\phi) \prod_{p \in \mathcal{P}_M} (\psi(p) - \phi(p)p^k)) > 0$.
\item $\text{ord}_{\lambda'}((\psi(p)-\phi(p)p^k)(\psi(p)-\phi(p)p^{k-2}))>0$ for each $p \in \mathcal{P}_M$.
\end{enumerate}
\end{conj}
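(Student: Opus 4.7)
The plan is to attack the conjecture by its two directions: an \emph{``if''} direction that constructs the desired newform from the divisibility data, and an \emph{``only if''} direction that extracts conditions (1) and (2) from the hypothetical newform via its residual Galois representation. Both directions rely on carefully understanding the interaction between the Eisenstein series $E_k^{\psi,\phi}$ at level $N$ and the space $S_k(\Gamma_0(NM),\tilde\chi)$ of cusp forms at level $NM$.

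For the ``if'' direction, I would work inside $\mathcal{E}_k(\Gamma_0(NM),\tilde\chi)$ with the old lifts $E_k^{\psi,\phi}(tz)$ for $t\mid M$; each has the Hecke eigenvalue $\psi(q)+\phi(q)q^{k-1}$ for $q\nmid NM$. Using standard formulas for constant terms at the cusps of $\Gamma_0(NM)$ (which, since $NM$ is square-free, are indexed by pairs of coprime divisors), each constant term of $E_k^{\psi,\phi}(tz)$ factors as a product of local contributions at the primes of $M$. I would then solve a linear system for coefficients $\alpha_t\in\mathcal{O}_f[\psi,\phi]$ so that the combination $G=\sum_{t\mid M}\alpha_t E_k^{\psi,\phi}(tz)$ has every cusp constant term divisible by $\lambda'$. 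Based on the $M=p$ case, one expects the ``deepest'' cusp to contribute exactly the global factor $L(1-k,\psi^{-1}\phi)\prod_{p\in\mathcal{P}_M}(\psi(p)-\phi(p)p^k)$---explaining the origin of condition (1)---while the remaining cusp constraints are resolved by solving local $2\times 2$ systems at each $p\in\mathcal{P}_M$. The mod $\lambda'$ reduction of $G$ is then a cuspidal eigenform (for Hecke operators away from $NM$) congruent to $E_k^{\psi,\phi}$, and a Deligne--Serre lifting lemma produces a characteristic-zero eigenform in $S_k(\Gamma_0(NM),\tilde\chi)$ realising the congruence; condition (2) at each $p\in\mathcal{P}_M$ is invoked to rule out that the witness is old at $p$, yielding a newform.

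For the ``only if'' direction, apply Deligne's construction and Chebotarev so that the semisimplified residual representation $\overline{\rho}_{f,\lambda}^{\text{ss}}$ has Frobenius traces $\psi(q)+\phi(q)q^{k-1}\bmod\lambda$ for $q\nmid NMl$. At each $p\in\mathcal{P}_M$, since $\tilde\chi$ is trivial at $p$ (as $p\nmid N=\text{cond}(\chi)$), the local representation at $p$ is of Steinberg type, so $\overline{\rho}_{f,\lambda}|_{G_{\mathbb{Q}_p}}$ has Frobenius eigenvalues $\{a_p(f),\,p\cdot a_p(f)\}$ on its semisimplification; matching these with the Eisenstein eigenvalues $\{\psi(p),\phi(p)p^{k-1}\}$ modulo $\lambda'$ splits into the two cases $\psi(p)\equiv\phi(p)p^{k-2}$ and $\psi(p)\equiv\phi(p)p^k$, proving condition (2). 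For condition (1), if $\psi(p)\equiv\phi(p)p^k$ for some $p\in\mathcal{P}_M$ then the relevant product automatically vanishes modulo $\lambda'$; otherwise condition (2) forces $\psi(p)\equiv\phi(p)p^{k-2}$ at every $p\in\mathcal{P}_M$, and a cusp-constant analysis (reversing the ``if'' direction and using that $f\bmod\lambda$ lies in the Eisenstein ideal generated by $E_k^{\psi,\phi}$) yields $\text{ord}_{\lambda'}(L(1-k,\psi^{-1}\phi))>0$.

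The main obstacle, and the reason only the cases $M=1$ and $M=p$ are fully resolved, is the combinatorial and arithmetic complexity of the linear system over $\mathbb{Z}[\psi,\phi]/\lambda'$ governing the constant terms at the $2^{|\mathcal{P}_M|}$ cusps of $\Gamma_0(NM)$. For general square-free $M$ the divisibilities at different primes of $M$ interact non-trivially, and one must verify simultaneously that the cusp-elimination system has a solution precisely when condition (1) holds and that the resulting Deligne--Serre lift is genuinely new at every $p\in\mathcal{P}_M$ under condition (2), without introducing spurious old components at subsets of primes of $M$.
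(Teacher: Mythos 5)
Your overall plan follows the same lines as the paper's partial results: construct an Eisenstein combination at level $NM$ whose cusp constant terms are killed by Condition (1), apply Deligne--Serre, and in the converse direction extract Conditions (1) and (2) from the residual Galois representation and local comparisons at $p\mid M$. The paper packages the linear combination you describe as $E_{\underline\delta}=\prod_{p\in\mathcal P_M}(T_p-\delta_p)\alpha_M E_k^{\psi,\phi}$ for a judicious choice of $\delta_p\in\{\psi(p),\phi(p)p^{k-1}\}$; its Corollary on cusp values shows that \emph{every} nonzero constant term of $E_{\underline\delta}[\gamma]_k$ carries the common factor $C_\gamma\prod_{p\in\mathcal P_M}(1-\delta_p\cdot\ast)$ rather than distributing different factors over different cusps, so Condition (1) alone handles the whole $2^{|\mathcal P_M|}$-family at once --- there is no genuine linear system to solve. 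This is a cleaner version of the same idea.

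There are, however, two substantive gaps to flag. First, in your ``only if'' direction, after matching Frobenius data at $p\mid M$ you claim that if Case (B) holds at every $p$ then ``a cusp-constant analysis'' yields $\mathrm{ord}_{\lambda'}(L(1-k,\psi^{-1}\phi))>0$. To make this precise you must pass from coefficient-wise congruences $a_n(E_{\underline\delta})\equiv a_n(f)\bmod\lambda$ for $n$ coprime to $l$ (obtained from comparing Hecke eigenvalues at all $p\neq l$, including $p\mid N$ where yet another local-factor comparison is needed) to a congruence of the forms themselves, i.e.\ at \emph{all} coefficients including those divisible by $l$. The paper does this with Serre's theta operator $\Theta=q\,d/dq$ and Katz's injectivity of $\Theta$ in weight $k<l-1$: from $\Theta(E_{\underline\delta})\equiv\Theta(f)$ one concludes $E_{\underline\delta}\equiv f\bmod\lambda$, and only then does ``$E_{\underline\delta}$ vanishes at the cusps mod $\lambda$'' follow. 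Your sketch omits this step, and without it the argument does not close.

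Second, in your ``if'' direction you write that ``condition (2) at each $p\in\mathcal P_M$ is invoked to rule out that the witness is old at $p$, yielding a newform.'' That is exactly where the argument stalls: for a single prime $M=p$ one verifies from Conditions (1) and (2) the Diamond level-raising criterion $a_p(f_1)^2\equiv\chi(p)p^{k-2}(1+p)^2\pmod{\lambda_0}$ and invokes Diamond's theorem to produce a $p$-newform; but for composite $M$ this only yields, for each $p\mid M$ separately, \emph{some} eigenform that is $p$-new and satisfies the congruence, not a single form that is simultaneously new at every $p\mid M$. Your final paragraph acknowledges this as the main obstacle, but the earlier sentence asserting that Condition (2) ``rules out'' oldness overstates what the level-raising mechanism delivers and is inconsistent with that (correct) concluding assessment.
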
 

Condition (1) of the conjecture is enough to guarantee the existence of an eigenform in $S_k(\Gamma_0(NM),\tilde{\chi})$ satisfying the congruence (see Theorem \ref{thm:eigenform} below). This condition can be thought of as the analogue of $\text{ord}_{691}\left(-\frac{B_{12}}{24}\right)>0$ in Ramanujan's congruence, but now allowing prime divisors of Euler factors as well as Dirichlet $L$-values (an artifact of our Eisenstein series being lifted from level $N$ to level $NM$). Such congruences coming from divisibility of the Euler factor (as opposed to the complete $L$-value) are often said to be of ``local origin".

Condition (2) can be thought of as measuring how ``new'' the modular form $f$ in the conjecture is, i.e. if the prime $l$ only satisfied condition (2) for all $p \in \mathcal{P}_d$, with $d \mid M$, then we would only expect to find a $d$-newform $f$ satisfying the congruence condition. 

We will prove the reverse implication of this Conjecture, and partial results concerning the direct implication. The major hurdle is that the Conditions (1) and (2) allow us to prove the existence of a $p$-newform $f$ satisfying the congruence condition for each $p \in \mathcal{P}_M$, but we are currently unable to show that these $p$-newforms can in fact be taken to the same genuine newform.

Later, we will discuss the relationship of this conjecture with previous results in this area, for example \cite{dummigan_2007}, \cite{billerey_menares_2016}, \cite{dummigan_fretwell_2014}, as well as connections with the Bloch-Kato Conjecture. We will also see computational examples using MAGMA \cite{magma}.

\subsection{Initial results}
To get us started, we use $E_k^{\psi,\phi}$ to construct various Eisenstein series of level $MN$ having specified constant terms. The following construction and results generalise those found in \cite[Section 1.2.2]{billerey_menares_2016}

For each $m\geq 1$ we define the operator $\alpha_m$, acting on complex valued functions on the upper half plane, by $(\alpha_m f)(z)=f(mz)$. We then consider the collection of Eisenstein series given by
\begin{equation}
\label{eq:E}
    E_{\underline{\delta}}(z)=\left[\prod_{p\in \mathcal{P}_M}(T_p - \delta_p)\right]\alpha_M E_k^{\psi,\phi}\in \mathcal{E}_k(\Gamma_0(NM),\tilde{\chi}),
\end{equation}
where $\underline{\delta} = \{\delta_p\}_{p\mid M}$ and $\delta_p$ is determined by fixing an ordering:
\begin{equation}
\label{eq:delta/eps}
    \{\delta_p, \varepsilon_p\} = \{\psi(p), \phi(p)p^{k-1}\}.
\end{equation} 

\noindent When $M=1$ there is no choice and we define $E_{\underline{\delta}} = E_k^{\psi,\phi}$.

\begin{lemma}
\label{lemma:E}
Each Eisenstein series $E_{\underline{\delta}}$ is a normalised eigenform in \\ $\mathcal{E}_k(\Gamma_0(NM), \tilde{\chi})$. For each prime $p$ we have:
\begin{equation*}
T_p E_{\underline{\delta}}=
\begin{cases}
 \varepsilon_pE_{\underline{\delta}} & \text{if } p \in \mathcal{P}_M \\
 (\psi(p)+\phi(p)p^{k-1})E_{\underline{\delta}} & \text{otherwise} \\
\end{cases}
\end{equation*}
We can also write 
\begin{equation}
\label{eq:altE}
    E_{\underline{\delta}}=\sum_{m \mid M}(-1)^{|\mathcal{P}_m|} \delta_m \alpha_m E_k^{\psi,\phi}.
\end{equation}
where $\delta_m = \prod_{p \in \mathcal{P}_m} \delta_p$ for each $m\mid M$.
\end{lemma}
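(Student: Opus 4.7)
My plan is to first derive the closed-form expansion (\ref{eq:altE}) by unwinding the product definition, and then read off both the normalisation and the Hecke eigenvalues from that expansion together with short direct computations.

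For (\ref{eq:altE}), I would expand $\prod_{p \in \mathcal{P}_M}(T_p - \delta_p)$ as a signed sum over subsets $S \subseteq \mathcal{P}_M$. The key observation is that for $p \in \mathcal{P}_M$ the Hecke operator at level $NM$ coincides with $U_p$ (because $\tilde{\chi}(p) = 0$), and at this level the identity $T_p \alpha_m E_k^{\psi,\phi} = \alpha_{m/p} E_k^{\psi,\phi}$ holds whenever $p \mid m$, as follows from $U_p \alpha_p = \mathrm{id}$ on Fourier coefficients. Since the $T_p$ for distinct $p \in \mathcal{P}_M$ commute, iterated application gives $\prod_{p \in S} T_p \alpha_M E_k^{\psi,\phi} = \alpha_{M/\prod_{p \in S} p} E_k^{\psi,\phi}$. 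Re-indexing by $m = \prod_{p \notin S} p$ (a divisor of $M$) turns the signed sum into exactly (\ref{eq:altE}).

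Once (\ref{eq:altE}) is established, normalisation is immediate, since $a_1(\alpha_m E_k^{\psi,\phi}) = 0$ for $m > 1$ while $a_1(E_k^{\psi,\phi}) = 1$, so only the $m=1$ term contributes. For the eigenvalues I would split into cases. When $p \notin \mathcal{P}_M$, the operator $T_p$ at level $NM$ commutes with every $\alpha_m$ for $m \mid M$ (since $\gcd(p,m) = 1$), and $E_k^{\psi,\phi}$, being new at level $N$, is a $T_p$-eigenform with eigenvalue $\psi(p) + \phi(p)p^{k-1}$; applying $T_p$ term-by-term to (\ref{eq:altE}) then gives the claimed eigenvalue. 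When $p \in \mathcal{P}_M$, I would split (\ref{eq:altE}) according to whether $p \mid m$ to rewrite $E_{\underline{\delta}} = F' - \delta_p \alpha_p F'$, where $F' = \sum_{m \mid M/p} (-1)^{|\mathcal{P}_m|} \delta_m \alpha_m E_k^{\psi,\phi}$ is the analogous construction at level $NM/p$. Using $U_p \alpha_p F' = F'$ together with $U_p F' = (\delta_p + \varepsilon_p) F' - \delta_p \varepsilon_p \alpha_p F'$ (obtained from the level-$N$ identity $T_p E_k^{\psi,\phi} = (\delta_p + \varepsilon_p) E_k^{\psi,\phi}$ and the fact that $\chi(p)p^{k-1} = \psi(p)\phi(p)p^{k-1} = \delta_p \varepsilon_p$), a short cancellation yields $T_p E_{\underline{\delta}} = \varepsilon_p(F' - \delta_p \alpha_p F') = \varepsilon_p E_{\underline{\delta}}$.

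The main obstacle is careful bookkeeping across the two levels $N$ and $NM$: one must keep straight that the same formal operator $T_p$ is given by a two-term expression at level $N$ (where $p \nmid N$ for $p \in \mathcal{P}_M$) but acts as the one-term $U_p$ at level $NM$, and that the commutations $\alpha_p U_q = U_q \alpha_p$ for distinct primes hold at the appropriate levels. No genuinely new idea is needed beyond the identity $U_p \alpha_p = \mathrm{id}$ and the known level-$N$ eigenvalues of $E_k^{\psi,\phi}$; the combinatorics of the telescoping product, however, demands some care.
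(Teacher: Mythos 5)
Your proof is correct and rests on the same underlying Fourier-coefficient identities as the paper, but you organize the argument in reverse order and use a different decomposition for the hard case. The paper first proves a pointwise claim for $T_p\alpha_m E_k^{\psi,\phi}$ (separately for $p\mid m$ and $p\nmid m$, $m\mid M$) by direct computation on $q$-expansions, uses this to evaluate $(T_p^2-\delta_p T_p)\alpha_M E_k^{\psi,\phi} = \varepsilon_p(T_p-\delta_p)\alpha_M E_k^{\psi,\phi}$ and thereby obtain the eigenvalue for $p\in\mathcal{P}_M$, and only afterwards derives \eqref{eq:altE} from the claim by inclusion--exclusion. You instead derive \eqref{eq:altE} first by expanding the defining product and iterating $T_p\alpha_M E_k^{\psi,\phi} = \alpha_{M/p}E_k^{\psi,\phi}$ (valid because $T_p = U_p$ at level $NM$ when $\tilde\chi(p)=0$ and $U_p\alpha_p=\mathrm{id}$), and then read off the $p\in\mathcal{P}_M$ eigenvalue via the factorization $E_{\underline{\delta}} = F' - \delta_p\alpha_p F'$, where $F'$ is the analogous construction over $M/p$. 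The two facts you invoke --- $U_p\alpha_p = \mathrm{id}$, and $U_p E_k^{\psi,\phi} = (\delta_p+\varepsilon_p)E_k^{\psi,\phi} - \delta_p\varepsilon_p\alpha_p E_k^{\psi,\phi}$ --- are exactly the two halves of the paper's claim (the $p\mid m$ and $p\nmid m$ cases), so the computational content is the same. Your packaging makes the recursive structure in $M$ more transparent and avoids the quadratic polynomial in $T_p$, at the small cost (which you flag) of having to track $U_p$ consistently across the levels $N$, $NM/p$, and $NM$.
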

\begin{proof}
If $p \notin \mathcal{P}_M$ then

\begin{align*}
    a_n(T_p(\alpha_M E_k^{\psi, \phi})) &= a_{np}(\alpha_M E_k^{\psi, \phi}) + \tilde{\chi}(p)p^{k-1}a_{n/p}(\alpha_M E_k^{\psi,\phi}) \\
    &= a_{np/M}(E_k^{\psi,\phi}) + \chi(p)p^{k-1}a_{n/pM}(E_k^{\psi,\phi}) \\
    &= a_{n/M}(T_pE_k^{\psi,\phi}) \\
    &= a_n(\alpha_M (T_p E_k^{\psi,\phi})).
\end{align*}

\noindent Hence, we see that $T_p \alpha_M E_k^{\psi,\phi} = \alpha_M T_p E_k^{\psi,\phi}$. It follows that 
\begin{align*}
    T_pE_{\underline{\delta}}&=T_p \left[\prod_{q\in \mathcal{P}_M} (T_q - \delta_q)\right]\alpha_ME_k^{\psi,\phi} \\
    &=\left[\prod_{q\in \mathcal{P}_M} (T_q - \delta_q)\right]\alpha_M T_pE_k^{\psi,\phi} \\
    &=(\psi(p)+\phi(p)p^{k-1})E_{\underline{\delta}}
\end{align*}

\noindent If $p \in \mathcal{P}_M$ we find that: \begin{align*}T_pE_{\underline{\delta}}&=T_p \left[\prod_{q\in \mathcal{P}_M} (T_q - \delta_q)\right]\alpha_ME_k^{\psi,\phi} \\
    &=\left[\prod_{q\in \mathcal{P}_{M/p}} (T_q - \delta_q)\right](T_p^2-\delta_pT_p)\alpha_M E_k^{\psi,\phi}.\end{align*} and we must figure out the action of the final operator on $E_k^{\psi,\phi}$. We do this by first proving the claim that for each $m\mid M$:
\begin{equation*}
T_p\alpha_m E_k^{\psi, \phi} = 
    \begin{cases}
    \alpha_{m/p}E_k^{\psi,\phi} & \text{if } p \in 
    \mathcal{P}_m \\
    (\psi(p)+\phi(p)p^{k-1})\alpha_{m}E_k^{\psi,\phi}-\chi(p)p^{k-1}\alpha_{mp}E_k^{\psi,\phi} & \text{if } p \notin 
    \mathcal{P}_m
    \end{cases}
\end{equation*}
To prove the claim, note that
\begin{align*}
    a_n(T_p\alpha_m E_k^{\psi,\phi})&=a_{np}(\alpha_m E_k^{\psi,\phi}) + \tilde{\chi}(p)p^{k-1}a_{n/p}(\alpha_m E_k^{\psi,\phi}) \\
    &= a_{np}(\alpha_m E_k^{\psi,\phi}) \\
    &= \sigma_{k-1}^{\psi,\phi}\left(\frac{np}{m}\right).
\end{align*}
(here, $\tilde{\chi}(p)$ vanishes since $\tilde{\chi}$ has modulus $NM$ and $p \in \mathcal{P}_M$).

\noindent When $p \in \mathcal{P}_m$ we have $a_n(T_p\alpha_m E_k^{\psi,\phi})=\sigma_{k-1}^{\psi,\phi}\left(\frac{np}{m}\right)=a_n(\alpha_{m/p}E_k^{\psi,\phi})$, and when $p\notin \mathcal{P}_m$ we use the fact that:
\begin{equation}
\label{eq:powerdiv}
    \sigma_{k-1}^{\psi,\phi}(np)+\chi(p)p^{k-1}\sigma_{k-1}^{\psi,\phi}(n/p) = (\psi(p)+\phi(p)p^{k-1})\sigma_{k-1}^{\psi,\phi}(n)
\end{equation}
to get
\begin{equation*}
    \sigma_{k-1}^{\psi,\phi}\left(\frac{np}{m}\right) = (\psi(p)+\phi(p)p^{k-1})\sigma_{k-1}^{\psi,\phi}\left(\frac{n}{m}\right)-\chi(p)p^{k-1}\sigma_{k-1}^{\psi,\phi}\left(\frac{n}{mp}\right),
\end{equation*}
so that 
\begin{equation*}
   a_n(T_p\alpha_m E_k^{\psi,\phi})=(\psi(p)+\phi(p)p^{k-1})a_n(\alpha_mE_k^{\psi,\phi})-\chi(p)p^{k-1}a_n(\alpha_{mp}E_k^{\psi,\phi}).
\end{equation*}

\noindent The claim follows and so
\begin{align*}
    (T_p^2-\delta_pT_p)\alpha_ME_k^{\psi,\phi} &= T_p\alpha_{M/p}E_k^{\psi,\phi} - \delta_p\alpha_{M/p}E_k^{\psi,\phi} \\
    &=(\psi(p)+\phi(p)p^{k-1})\alpha_{M/p}E_k^{\psi,\phi}-\chi(p)p^{k-1}\alpha_ME_k^{\psi,\phi} - \delta_p\alpha_{M/p}E_k^{\psi,\phi} \\
    &=\varepsilon_p\alpha_{M/p}E_k^{\psi,\phi} - \chi(p)p^{k-1}\alpha_ME_k^{\psi,\phi} \\
    &=\varepsilon_p(T_p-\delta_p)\alpha_M E_k^{\psi,\phi}.
\end{align*}
Therefore, when $p\in \mathcal{P}_M$ we have that $T_pE_{\underline{\delta}}=\varepsilon_pE_{\underline{\delta}}$, as required. 

Equation \eqref{eq:altE} holds by the Inclusion-Exclusion Principle and the claim. The fact that $E_{\underline{\delta}}$ is normalised now follows, since only the term with $m=1$ contributes to the $q$ coefficient of $E_{\underline{\delta}}$, and $E_k^{\psi,\phi}$ is normalised.
\end{proof}

We now state a Proposition which allows us to determine the value of $E_{\underline{\delta}}$ at all cusps. The proof can be found in \cite[Proposition 3.1.2]{spencer_2018} or \cite[Proposition 4]{billerey_menares_2017}.

\begin{propn}[Spencer]
\label{prop:spencer}
If $m \geq 1$ is coprime to $N$ and $\gamma = \left(\begin{smallmatrix} a & \beta \\ b & \delta \end{smallmatrix}\right) \in SL_2(\field{Z})$ then the constant term of $(\alpha_mE_k^{\psi,\phi})[\gamma]_k$ is given by
\begin{equation*}
    a_0((\alpha_m E_k^{\psi,\phi})[\gamma]_k) = \begin{cases}
    -\frac{g(\psi\phi^{-1})}{g(\phi^{-1})}\frac{\phi^{-1}(m^\prime a) \psi\left(\frac{-b^\prime}{v}\right)}{u^k m^{\prime k}}\frac{L(1-k, \psi^{-1}\phi)}{2} & \text{ if }v\mid b'\\ 0 & \text{otherwise}
    \end{cases}
\end{equation*}
where $b^\prime = \frac{b}{gcd(b,m)}$, $m^\prime=\frac{m}{gcd(b,m)}$ and $g(\phi)=\sum_{n=0}^{v-1}\phi(n)e^{\frac{2\pi i n}{v}}$ is the Gauss sum of $\phi$. 
\end{propn}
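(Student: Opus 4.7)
The strategy is to reduce the cusp computation to a classical evaluation of $E_k^{\psi,\phi}$ at a cusp of $\Gamma_0(N)$ via a Bruhat-style decomposition of the determinant-$m$ matrix $\widetilde{\alpha}_m\gamma$, where $\widetilde{\alpha}_m = \left(\begin{smallmatrix} m & 0\\0 & 1\end{smallmatrix}\right)$. Using the normalisation $\alpha_m f = m^{-k/2}(f|_k\widetilde{\alpha}_m)$, we have $(\alpha_m E_k^{\psi,\phi})[\gamma]_k = m^{-k/2}\,E_k^{\psi,\phi}|_k(\widetilde{\alpha}_m\gamma)$. Set $g = \gcd(b,m)$ and choose $B \in \mathbb{Z}$ by Bezout so that $m \mid \delta g - bB$, writing $\delta^\ast = (\delta g - bB)/m$. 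A direct matrix calculation then yields the factorisation
\[
  \widetilde{\alpha}_m\gamma = \gamma^\ast U, \qquad U = \begin{pmatrix} g & B \\ 0 & m' \end{pmatrix}, \qquad \gamma^\ast = \begin{pmatrix} m'a & -aB+\beta g \\ b' & \delta^\ast \end{pmatrix} \in SL_2(\mathbb{Z}),
\]
reducing the problem to the constant term of $E_k^{\psi,\phi}|_k\gamma^\ast$, twisted by the upper-triangular action of $U$.

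I would next invoke the classical cusp expansion of $E_k^{\psi,\phi}$ at $\gamma^\ast\infty$, working from the double-series representation $E_k^{\psi,\phi}\propto \sum_{(c,d)}\psi(c)\phi(d)(cz+d)^{-k}$ (suitably completed). Slashing by $\gamma^\ast$ permutes $(c,d)\mapsto (c,d)\gamma^\ast$, and sorting the resulting pairs into $\Gamma_\infty$-orbits isolates the constant term as an explicit Gauss-sum-weighted multiple of $L(1-k,\psi^{-1}\phi)$. A standard computation (in the spirit of the level-$N$ analogue of \cite[Thm.\ 4.2.3]{diamond_shurman_2016}) shows that only cosets whose bottom-left entry is divisible by $v = \text{cond}(\phi)$ survive, which in our setting is precisely the condition $v\mid b'$. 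When this holds, one extracts the prefactor
\[
  -\tfrac{g(\psi\phi^{-1})}{g(\phi^{-1})}\,\tfrac{\phi^{-1}(m'a)\,\psi(-b'/v)}{u^k}\,\tfrac{L(1-k,\psi^{-1}\phi)}{2},
\]
with $\phi^{-1}$ and $\psi$ read off the top-left and (rescaled) bottom-left entries of $\gamma^\ast$.

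Finally, I would push $U$ through. Under $|_k$, the upper-triangular matrix $U$ acts on any Fourier expansion at $\infty$ by the substitution $z\mapsto(gz+B)/m'$ together with the scalar $m^{k/2}/m'^k$, and in particular preserves the constant term up to this scalar. Combining with the outer $m^{-k/2}$ from the normalisation of $\alpha_m$ gives a net multiplier $m'^{-k}$, which combines with the $u^{-k}$ above to produce the denominator $u^k m'^k$ of the claimed formula. The $v\nmid b'$ branch is then immediate from the vanishing in the previous step.

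The principal obstacle is the bookkeeping inside the cusp computation: one must verify that the right-action of $\gamma^\ast$ on pairs $(c,d)$ really does isolate the $v\mid c^\ast$ condition from the $\phi$-side and pair $\psi$ with the value $-c^\ast/v$ (rather than with the $u$-side entry); and one must track how the Gauss sum quotient $g(\psi\phi^{-1})/g(\phi^{-1})$ emerges when converting the twisted Eisenstein series back to the standard Fourier normalisation. The asymmetry between $\psi$ (of conductor $u$) and $\phi$ (of conductor $v$) means the sign in $\psi(-b'/v)$ arises precisely from inverting $\gamma^\ast$ modulo $v$. Everything else is routine character-sum manipulation, but one has to be fastidious about conductors, signs, and which column of $\gamma^\ast$ each character sees.
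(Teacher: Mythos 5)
The paper does not supply its own proof of this Proposition; it cites Spencer's thesis and Billerey--Menares directly, so there is nothing in the source to compare against line by line. On its own merits, your sketch is a correct and sensible plan. The factorisation $\widetilde\alpha_m\gamma = \gamma^\ast U$ checks out: with $g = \gcd(b,m)$, $m' = m/g$, $b' = b/g$, one needs $B$ with $b'B \equiv \delta \pmod{m'}$, which exists since $\gcd(b',m')=1$; the top row, bottom row and determinant all verify, and $\gamma^\ast \in SL_2(\mathbb{Z})$. Pushing $U = \left(\begin{smallmatrix} g & B \\ 0 & m' \end{smallmatrix}\right)$ through the weight-$k$ slash gives the scalar $m^{k/2}m'^{-k}$ on the constant term, which against the outer $m^{-k/2}$ from your normalisation of $\alpha_m$ leaves exactly $m'^{-k}$, as you say. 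Feeding $\gamma^\ast$ (top-left $m'a$, bottom-left $b'$) into the $m=1$ constant-term formula then reproduces the claimed expression, including the condition $v \mid b'$.

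The one caveat is that your argument is a genuine reduction to the $m=1$ case, which you do not actually prove but wave at ("a standard computation in the spirit of Diamond--Shurman"). That $m=1$ formula is precisely the content of $C_\gamma$ in this paper, and it is not trivial: the Gauss-sum quotient $g(\psi\phi^{-1})/g(\phi^{-1})$, the placement of $\phi^{-1}$ on the $a$-entry versus $\psi$ on the $-b/v$-entry, and the sign, all require the careful Fourier computation you flag as a "bookkeeping obstacle." So your proposal is a correct and clean reduction, but you should regard the $m=1$ cusp formula as the actual substance of the proof (and it is exactly what Spencer and Billerey--Menares establish). The upshot of your organisation is that the $m>1$ case costs nothing extra beyond elementary matrix algebra once the base case is in hand, which is a tidy way to present it.
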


For convenience we will write:
\begin{equation*}
    C_{\gamma}=-\frac{g(\psi\phi^{-1})}{g(\phi^{-1})}\frac{\phi^{-1}(a) \psi\left(\frac{-b}{v}\right)}{u^k }\frac{L(1-k, \psi^{-1}\phi)}{2}
\end{equation*}
Using this and the notation in Proposition \ref{prop:spencer}, we can write a formula for the constant term of $E_{\underline{\delta}}[\gamma]_k$.
\begin{corollary}
\label{cor:cusp}
If $\gamma = \left(\begin{smallmatrix} a & \beta \\ b & \delta \end{smallmatrix}\right) \in SL_2(\field{Z})$ and $M' = \frac{M}{\text{gcd}(M,b)}$ then the constant term of $E_{\underline{\delta}}[\gamma]_k$ is
\begin{equation*}
    a_0(E_{\underline{\delta}}[\gamma]_k) = \begin{cases}C_{\gamma} \prod_{p\in \mathcal{P}_{M'}}(1 - \delta_p \phi^{-1}(p)p^{-k})\prod_{p \in \mathcal{P}_{M/M'}} (1 - \delta_p \psi^{-1}(p))  & \text{ if } v\mid b'\\ 0& \text{ otherwise}\end{cases}.
\end{equation*}
\end{corollary}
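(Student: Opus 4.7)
The plan is to substitute the explicit expansion of $E_{\underline{\delta}}$ coming from Lemma \ref{lemma:E}, namely
\begin{equation*}
E_{\underline{\delta}} = \sum_{m \mid M} (-1)^{|\mathcal{P}_m|}\, \delta_m\, \alpha_m E_k^{\psi,\phi},
\end{equation*}
and then apply Proposition \ref{prop:spencer} term by term. Since $\gcd(N,M)=1$ and each $m \mid M$ is coprime to $N$, the hypothesis of Spencer's proposition is satisfied for every summand.

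The key book-keeping step is the decomposition of each $m \mid M$ dictated by the action of $\gcd(\cdot,b)$. Using square-freeness of $M$, I would write $m = m_1 m_2$ where $m_1 = \gcd(m,M')$ is a product of primes not dividing $b$ and $m_2 = \gcd(m,M/M')$ is a product of primes dividing $b$. Then $\gcd(b,m)=m_2$, so in the notation of Proposition \ref{prop:spencer} we get $m' = m_1$ and $b' = b/m_2$. Because $v \mid N$ is coprime to $M \supseteq m_2$, the divisibility $v \mid b' = b/m_2$ is equivalent to $v \mid b$, which is independent of $m$. This immediately yields the case distinction in the corollary: when $v \nmid b$ every summand vanishes, and otherwise every summand contributes.

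Assuming $v \mid b$, I would rewrite $\psi\!\left(\frac{-b'}{v}\right) = \psi\!\left(\frac{-b}{v}\right)\psi^{-1}(m_2)$ (legitimate as $\gcd(v,m_2)=1$) and $\phi^{-1}(m_1 a) = \phi^{-1}(a)\phi^{-1}(m_1)$, pulling the $m$-independent factors out to form exactly $C_\gamma$. What remains inside the sum is
\begin{equation*}
a_0(E_{\underline{\delta}}[\gamma]_k) = C_\gamma \sum_{m \mid M} (-1)^{|\mathcal{P}_m|}\, \delta_m \, \frac{\phi^{-1}(m_1)\,\psi^{-1}(m_2)}{m_1^{k}}.
\end{equation*}
Using multiplicativity $\delta_m = \delta_{m_1}\delta_{m_2}$ and $(-1)^{|\mathcal{P}_m|} = (-1)^{|\mathcal{P}_{m_1}|}(-1)^{|\mathcal{P}_{m_2}|}$, the double sum factors into a sum over $m_1 \mid M'$ times a sum over $m_2 \mid M/M'$. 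Each factor is an inclusion--exclusion sum that collapses into an Euler-style product over primes, giving
\begin{equation*}
\prod_{p\in\mathcal{P}_{M'}}\!\bigl(1 - \delta_p \phi^{-1}(p) p^{-k}\bigr)\,\prod_{p\in\mathcal{P}_{M/M'}}\!\bigl(1 - \delta_p \psi^{-1}(p)\bigr),
\end{equation*}
which matches the claimed expression.

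The whole argument is essentially a careful bookkeeping exercise; the only subtle point, and the one I would be most careful about, is the interplay between the condition $v \mid b'$ and the factor $m_2$, and verifying that $\psi$ is well defined on $-b/(vm_2)$ so that the splitting $\psi(-b'/v) = \psi(-b/v)\psi^{-1}(m_2)$ is valid. Once that is handled, the factorization into local Euler factors is automatic from the square-freeness of $M$ and the multiplicativity of $\delta_m$.
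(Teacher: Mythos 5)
Your proof is correct and follows essentially the same route as the paper: expand $E_{\underline{\delta}}$ via Lemma \ref{lemma:E}, apply Proposition \ref{prop:spencer} term by term, pull out the $m$-independent factor $C_\gamma$ using the identity $\psi(-b'/v) = \psi(-b/v)\psi^{-1}(m/m')$, and then collapse the resulting sum over $m \mid M$ by Inclusion--Exclusion. Your explicit decomposition $m = m_1 m_2$ with $m_1 = \gcd(m,M')$, $m_2 = \gcd(m,M/M')$ and the observation that $v \mid b'$ is equivalent to $v \mid b$ (hence independent of $m$) is a slightly more careful rendering of bookkeeping that the paper leaves implicit, but it is the same argument.
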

\begin{proof}
By Lemma \ref{lemma:E}, we can write $E$ as
\begin{equation*}
    E_{\underline{\delta}}=\sum_{m \mid M}(-1)^{|\mathcal{P}_m|} \delta_m \alpha_m E_k^{\psi,\phi}.
\end{equation*}
It is clear from Proposition \ref{prop:spencer} that the constant term of $E_{\underline{\delta}}$ is zero if $v\nmid b'$ and so it suffices to consider the case $v \mid b^\prime$. First we use Proposition \ref{prop:spencer} to evaluate the constant term of $(-1)^{|\mathcal{P}_m|}\delta_m \alpha_m E_k^{\psi, \phi}[\gamma]_k$ for a fixed $m \mid M$ as follows:
\begin{align*}
    &(-1)^{|\mathcal{P}_m| +1}\frac{g(\psi \phi^{-1})}{g(\phi^{-1})} \frac{\phi^{-1}\left(m' a \right)\psi \left(\frac{-b'}{v}\right)}{u^k m'^k} \frac{L(1-k, \psi^{-1}\phi)}{2} \delta_m \\
    &= C_{\gamma} (-1)^{|\mathcal{P}_m|} \frac{\phi^{-1}(m')\psi^{-1}\left(\frac{m}{m'}\right)}{m'^k} \delta_m\\ &=  C_{\gamma} (-1)^{|\mathcal{P}_m|} \left(\prod_{p\in \mathcal{P}_{m'}} \delta_p\phi^{-1}(p)p^{-k} \right) \left( \prod_{p \in \mathcal{P}_{m/m'}} \delta_p\psi^{-1}(p) \right).\label{eq:constant}
\end{align*}
It follows that the constant term of $E_{\underline{\delta}}[\gamma]_k$ is:
\begin{align*}
    a_0(E_{\underline{\delta}}[\gamma]_k) &=C_{\gamma} \sum_{m \mid M} \left[ (-1)^{|\mathcal{P}_m|} \left(\prod_{p\in \mathcal{P}_{m'}} \delta_p\phi^{-1}(p)p^{-k}\right) \left( \prod_{p \in \mathcal{P}_{m/m'}} \delta_p\psi^{-1}(p) \right)\right] \\
    &= C_{\gamma} \prod_{p \in \mathcal{P}_{M'}} (1-\delta_p\phi^{-1}(p)p^{-k})  \prod_{p \in \mathcal{P}_{M/M'}} (1-\delta_p\psi^{-1}(p))
\end{align*}
by the Inclusion-Exclusion Principle.
\end{proof}

\subsection{Necessary conditions for a newform Eisenstein congruence.}
We have now constructed various Eisenstein series $E_{\underline{\delta}}$ whose constant coefficients contain terms which look somewhat like those in Condition (1) of Conjecture \ref{conj:general}. We are now in a position to use this to prove the reverse implication of the conjecture.

\begin{theorem}
\label{thm:reverse}
Suppose $f \in S_k^{\text{new}}(\Gamma_0(NM), \tilde{\chi})$ is a newform satisfying the congruence \[a_q(f) \equiv \psi(q) + \phi(q)q^{k-1} \ \text{mod } \lambda\] for all primes $q \nmid NM$ and for some prime $\lambda\mid l$ of $\mathcal{O}_f[\psi,\phi]$ (with the assumptions $k>2$, $l>k+1$ and $l\nmid NM$). Then conditions (1) and (2) of Conjecture \ref{conj:general} hold.
\end{theorem}

\begin{proof}
Consider the $\lambda$-adic Galois representation attached to $f$ given by \[\rho_{f,\lambda}: G_{\mathbb{Q}}\rightarrow \text{GL}_2(K_{f,\lambda_0}) \hookrightarrow \text{GL}_2(K_f[\psi,\phi]_{\lambda}),\] where the first arrow is the usual $\lambda_0$-adic Galois representation attached to $f$ (with $\lambda_0$ the unique prime of $K_f$ lying below $\lambda$). This may be conjugated to take values in $\text{GL}_2(\mathcal{O}_f[\psi,\phi]_\lambda)$. The congruence then implies that $\rho_{f,\lambda}$ is residually reducible mod $\lambda$ (by the Cebotarev density theorem and the Brauer-Nesbitt theorem, noting that $l = \text{char}(\mathbb{F}_{\lambda}) > 2$):
\begin{equation*}
\overline{\rho}_{f,\lambda}  \sim   \begin{pmatrix} \overline{\psi} & *\\ 0 & \overline{\phi} \chi_l^{k - 1} \end{pmatrix}
\end{equation*}
so that the semisimplification is given by
\begin{equation}
\label{eq:rep}
    \overline{\rho}_{f,\lambda}^{ss} \sim \overline{\psi} \oplus \overline{\phi} \chi_l^{k-1}.
\end{equation}
(i.e.\ $\overline{\rho}_{f,\lambda}$ has composition factors $\{\overline{\psi}, \overline{\phi}\chi_l^{k-1}\}$). Here $\chi_{l}:G_{\mathbb{Q}}\rightarrow \mathbb{F}_{l}^{\times}$ is the mod $l$ cyclotomic character.

For each $p\in \mathcal{P}_M$ it is known that that the composition factors of $\overline{\rho}_{f,\lambda}$ locally at $p$ are given by: 
\begin{equation*}
\overline{\rho}^{\text{ss}}_{f,\lambda}\mid_{W_{\mathbb{Q}_p}} \sim   \left(\eta\chi_l\oplus\eta\right)\mid_{W_{\mathbb{Q}_p}}
\end{equation*}
 Here $W_{\mathbb{Q}_p}$ is the local Weil group at $p$ and $\eta : W_{\mathbb{Q}_p}\rightarrow\mathbb{F}_\lambda^{\times}$ is the unramified character such that $\eta(\text{Frob}_p) \equiv a_p(f) \text{ mod } \lambda$ (e.g.\ see \cite[Theorem 4.2.7, (3)(b)]{hida}). By comparing determinants we find that $\eta^2\chi_l = \chi_l^{k-1}$, so that $\eta = \mu\chi_l^{(k-2)/2}$ for some unramified quadratic character $\mu$. This leads to the following equivalence for each $p\in \mathcal{P}_M$:
\begin{equation*}
   \left(\mu\chi_l^{k/2}\oplus\mu\chi_l^{(k-2)/2}\right)\mid_{W_{\mathbb{Q}_p}} \sim \left(\overline{\psi}\oplus \,\overline{\phi} \chi_l^{k-1}\right)\mid_{W_{\mathbb{Q}_p}}.
\end{equation*}
 There are only two possibilities (note also that $l\neq p$ since $p\in \mathcal{P}_M$):

\vspace{0.1in}
\begin{enumerate}
    \item[(A)] $\overline{\psi}\mid_{W_{\mathbb{Q}_p}} = \mu \chi_l^{k/2}\mid_{W_{\mathbb{Q}_p}}$\qquad\text{and}\qquad  $\overline{\phi} \chi_l^{k-1}\mid_{W_{\mathbb{Q}_p}} = \mu  \chi_l^{(k-2)/2}\mid_{W_{\mathbb{Q}_p}}$.\\
    
    \noindent Evaluating at $\text{Frob}_p$ gives 
    \begin{align*}\psi(p) &\equiv \mu(p)p^{k/2} \ \text{mod } \lambda\\ \phi(p)p^{k-1} &\equiv \mu(p)p^{(k-2)/2} \ \text{mod } \lambda,\end{align*} so that 
    \begin{equation*}
        \psi(p) - \phi(p)p^k \equiv 0 \ \text{mod } \lambda.
    \end{equation*}
    \item[(B)] $\overline{\psi}\mid_{W_{\mathbb{Q}_p}} = \mu \chi_l^{(k-2)/2}\mid_{W_{\mathbb{Q}_p}}$\qquad\text{and}\qquad $\overline{\phi} \chi_l^{k-1}\mid_{W_{\mathbb{Q}_p}} = \mu \chi_l^{k/2}\mid_{W_{\mathbb{Q}_p}}$. \\

    \noindent Evaluating at $\text{Frob}_p$ gives \begin{align*}\psi(p) &\equiv \mu(p)p^{(k-2)/2} \ \text{mod } \lambda\\ \phi(p)p^{k-1} &\equiv \mu(p)p^{k/2} \ \text{mod } \lambda,\end{align*} so that we have both of the following
    \begin{align*}
        \psi(p) - \phi(p)p^{k-2} &\equiv 0 \ \text{mod } \lambda\\ \psi(p) &\equiv a_p(f) \bmod \lambda.
    \end{align*}
\end{enumerate}

\noindent To summarise, one of the following must hold for each $p\in \mathcal{P}_M$:
\begin{enumerate}
    \item[(A)] $ \psi(p) - \phi(p)p^k \equiv 0 \ \text{mod } \lambda.$
    \item[(B)] $\psi(p) - \phi(p)p^{k-2} \equiv 0 \ \text{mod } \lambda$ and $a_p(f) \equiv \psi(p) \ \text{mod } \lambda$.
\end{enumerate}
Taking norms down to $\mathbb{Z}[\psi,\phi]$ gives Condition (2) (i.e. divisibility by $\lambda'$). It remains to prove that Condition (1) holds: \[\text{ord}_{\lambda'}\left(L(1-k, \psi^{-1}\phi) \prod_{p \in \mathcal{P}_M} (\psi(p) - \phi(p)p^k)\right) > 0.\] First note that this is immediate if there exists a prime $p \in \mathcal{P}_M$ satisfying case (A), since $l>k+1$ and $l\nmid N$. We assume from now on that case (B) is satisfied for each $p \in \mathcal{P}_M$.

Consider the Eisenstein series $E_{\underline{\delta}}$ corresponding to the choice $\delta_p=\phi(p)p^{k-1}$ for each $p \in \mathcal{P}_M$. We claim that for all primes $p\neq l$, the following congruence holds
\begin{equation*}
    a_p(E_{\underline{\delta}}) \equiv a_p(f) \ \text{mod } \lambda.
\end{equation*}
This is true for each prime $p\in \mathcal{P}_M$, since by Lemma \ref{lemma:E} we have \[a_p(E_{\underline{\delta}}) = \varepsilon_p = \psi(p) \equiv a_p(f) \bmod \lambda .\]  For each prime $p\in \mathcal{P}_N$ the form $f$ is $p$-new, and another comparison of local composition factors gives (e.g.\ see \cite[Theorem 4.2.7, (3)(a)]{hida})):
\begin{equation*}
 (\mu_1\oplus\mu_2)\mid_{W_{\mathbb{Q}_p}} \sim \left(\overline{\psi} \oplus \overline{\phi} \chi_l^{k-1}\right)\mid_{W_{\mathbb{Q}_p}}.
\end{equation*}
with $\mu_1,\mu_2: W_{\mathbb{Q}_p}\rightarrow\mathbb{F}_\lambda^{\times}$ characters of conductors $1$ and $p$ satisfying \[a_p(f) \equiv \mu_1(p)+\mu_2(p) \ \text{mod } \lambda.\] It follows that, for such primes \[a_p(f) \equiv \psi(p) + \phi(p)p^{k-1} \equiv a_p(E_{\underline{\delta}}) \ \text{mod } \lambda.\] For all other primes the claim follows from Lemma \ref{lemma:E} and the assumption that $f$ satisfies the congruence.
 
By the claim, and the fact that $f$ and $E_{\underline{\delta}}$ are both normalised Hecke eigenforms, we get the following congruence for all $n$ coprime to $l$
\begin{equation*}
    a_n(E_{\underline{\delta}}) \equiv a_n(f) \ \text{mod } \lambda
\end{equation*}
Applying the theta operator $\Theta = q\frac{d}{dq}$ in \cite{serre_2003}, we obtain $\Theta(E_{\underline{\delta}}) \equiv \Theta(f) \ \text{mod } \lambda$. However, the theta operator is injective for $l > k+1$ \cite[Corollary 3]{katz_1977}, and so we have that $E_{\underline{\delta}} \equiv f \ \text{mod } \lambda$. Since $f$ is a cusp form it must be that $E_{\underline{\delta}}$ must vanish at all cusps mod $\lambda$. Choosing any $\gamma \in SL_2(\field{Z})$ with lower left entry $b\neq 0$ such that $\mathcal{P}_M\subseteq \mathcal{P}_b$, we then find that 
\begin{equation*}
    \text{ord}_\lambda(a_0(E_{\underline{\delta}}[\gamma]_k)) = \text{ord}_\lambda\left(C_{\gamma}\left(\prod_{p \in \mathcal{P}_M} (1-\psi^{-1}(p)\phi(p)p^{k-1})\right)\right)>0.
\end{equation*} by Corollary \ref{cor:cusp} (note that choosing $b=0$ gives $C_{\gamma} = 0$, so that the divisibility condition is automatic).
Using the facts that $\frac{g(\psi\phi^{-1})}{g(\phi)}$, $\phi^{-1}(a), \psi^{-1}(M)$ and $\psi\left(\frac{-b}{v}\right)$ are units in $\field{Z}[\psi, \phi]$, and that $l \nmid 2u$ by assumption, we find that:
\begin{equation*}
    \text{ord}_{\lambda}\left(L(1-k, \psi^{-1}\phi)\prod_{p \in \mathcal{P}_M} (\psi(p)-\phi(p)p^{k-1})\right)>0.
\end{equation*}

\noindent By assumption, we also have that $\psi(p) - \phi(p)p^{k-2} \equiv 0 \bmod \lambda$ for each $p\in\mathcal{P}_M$, and so we have the implication: \[\text{ord}_{\lambda}(\psi(p)-\phi(p)p^{k-1}) > 0 \iff \text{ord}_{\lambda}(\psi(p)-\phi(p)p^k) > 0,\] since in both cases we have that $p\equiv 1 \bmod \lambda$. The following divisibility condition is then apparent:  \begin{equation*}
    \text{ord}_{\lambda}\left(L(1-k, \psi^{-1}\phi)\prod_{p \in \mathcal{P}_M} (\psi(p)-\phi(p)p^k)\right)>0.
\end{equation*}
Condition (1) follows again by taking the norm down to $\mathbb{Z}[\psi,\phi]$.
\end{proof}

\subsection{Sufficient conditions for a newform Eisenstein congruence.}

We now give partial results towards the direct implication of Conjecture \ref{conj:general}. First, we prove that Condition (1) guarantees the existence of an eigenform in $S_k(\Gamma_0(NM),\tilde{\chi})$ satisfying the congruence. The following is the necessary extension of \cite[Theorem 2.10]{dummigan_spencer} and \cite[Theorem 3.0.1]{spencer_2018}.

\begin{theorem}
\label{thm:eigenform}
Let $k > 2$ and $\lambda' \nmid 6NM$ be a prime of $\mathbb{Z}[\psi,\phi]$ such that \[\text{ord}_{\lambda'}\left(L(1-k, \psi^{-1} \phi) \prod_{p \in \mathcal{P}_M}(\psi(p) - \phi(p)p^k)\right)>0.\] There exists a normalised Hecke eigenform $f \in S_k(\Gamma_0(NM), \tilde{\chi})$ and a prime $\lambda\mid \lambda'$ of $\mathcal{O}_f[\psi,\phi]$ such that for all primes $q \nmid NM$, 
\begin{equation*}
    a_q(f) \equiv \psi(q)+\phi(q)q^{k-1} \ \text{mod } \lambda.
\end{equation*}
\end{theorem}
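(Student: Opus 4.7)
The plan is to apply the constant-term formula of Corollary \ref{cor:cusp} to a single well-chosen $E_{\underline{\delta}}$ and then invoke a Deligne-Serre style lifting. Specifically, I would take $\delta_p = \psi(p)$ for every $p\in \mathcal{P}_M$: with this choice, the factor $\prod_{p \in \mathcal{P}_{M/M'}}(1 - \delta_p \psi^{-1}(p))$ in Corollary \ref{cor:cusp} vanishes identically whenever some prime of $\mathcal{P}_M$ divides the lower-left entry $b$ of $\gamma$, so only cusps represented by $\gamma = \bigl(\begin{smallmatrix} a & \beta \\ b & \delta \end{smallmatrix}\bigr)$ with $\gcd(b,M)=1$ (and $v\mid b'$) can contribute a nonzero constant term.

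On such cusps, $M' = M$ and the remaining product is $\prod_{p\in \mathcal{P}_M}(1-\psi(p)\phi^{-1}(p)p^{-k})$, which differs from $\prod_{p \in \mathcal{P}_M}(\psi(p)-\phi(p)p^k)$ only by the factor $\prod_p p^{-k}\phi^{-1}(p)$, a $\lambda'$-adic unit because $\lambda'\nmid 6NM$. Similarly, the Gauss-sum ratio $g(\psi\phi^{-1})/g(\phi^{-1})$ involves characters of conductor dividing $N$, and $\phi^{-1}(a)$, $\psi(-b/v)$, $u^k$ are all units modulo $\lambda'$, so the prefactor $C_\gamma$ is a $\lambda'$-adic unit multiple of $L(1-k,\psi^{-1}\phi)$. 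The hypothesis on $\mathrm{ord}_{\lambda'}$ therefore translates directly into: every constant term of $E_{\underline{\delta}}$ at every cusp of $\Gamma_1(NM)$ has positive $\lambda'$-order.

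Next, I would fix an integral structure $\mathcal{O}\supseteq \mathbb{Z}[\psi,\phi]_{\lambda'}$ over which $E_{\underline{\delta}}$ is defined, invoke the integral $q$-expansion principle to deduce that the reduction of $E_{\underline{\delta}}$ modulo $\lambda'$ lies in the image of $S_k(\Gamma_1(NM),\tilde\chi;\mathcal{O})$, and observe that by Lemma \ref{lemma:E} this mod-$\lambda'$ cusp form is still a Hecke eigenvector with eigenvalues $\psi(q)+\phi(q)q^{k-1}$ for $q\nmid NM$ (and $\varepsilon_p$ for $p\in \mathcal{P}_M$). A Deligne-Serre lifting argument applied to the integral Hecke module $S_k(\Gamma_1(NM),\tilde\chi;\mathcal{O})$ then produces, after possibly enlarging the base ring, a characteristic-zero normalised cuspidal eigenform $f\in S_k(\Gamma_1(NM),\tilde\chi)$ together with a prime $\lambda\mid\lambda'$ of $\mathcal{O}_f[\psi,\phi]$ satisfying the required congruence.

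The principal technical point is the final Deligne-Serre step, and its precondition that vanishing of every cusp constant term really does place the mod-$\lambda'$ reduction of $E_{\underline{\delta}}$ in the integral cuspidal lattice; this is standard and closely follows the template of \cite[Theorem 2.10]{dummigan_spencer} and \cite[Theorem 3.0.1]{spencer_2018}, but one must remember that in the present generality the $\lambda'$-divisibility promised by Condition (1) may come entirely from the local Euler factors $\psi(p)-\phi(p)p^k$ rather than from $L(1-k,\psi^{-1}\phi)$ itself. It is exactly this possibility that forces the choice $\delta_p=\psi(p)$ above, since it is only with this choice that the ``local origin'' factors appear in the constant-term formula in the correct form to absorb the Euler-factor part of Condition (1).
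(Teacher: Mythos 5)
Your proposal is correct and follows essentially the same route as the paper: choose $\delta_p = \psi(p)$ for all $p\in\mathcal{P}_M$, invoke Corollary~\ref{cor:cusp} to see that every nonzero constant term of $E_{\underline{\delta}}$ is a $\lambda'$-adic unit times $L(1-k,\psi^{-1}\phi)\prod_{p\in\mathcal{P}_M}(\psi(p)-\phi(p)p^k)$, conclude that $E_{\underline{\delta}}$ reduces to a cusp form mod $\lambda'$, and then apply the Deligne--Serre/Carayol lifting as in \cite[Theorem 3.0.1]{spencer_2018}. You in fact spell out two details the paper leaves implicit, namely why the second product in Corollary~\ref{cor:cusp} kills constant terms at cusps with $\gcd(b,M)>1$, and why $\delta_p=\psi(p)$ (rather than $\phi(p)p^{k-1}$) is the choice that produces the Euler factor $\psi(p)-\phi(p)p^k$ appearing in Condition~(1).
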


\begin{proof}
We follow a standard argument (see for example \cite[Theorem 3.0.1]{spencer_2018}). Consider the Eisenstein series $E_{\underline{\delta}}$ corresponding to the choice $\delta_p=\psi(p)$ for all $p \in \mathcal{P}_M$. Let $R = \mathbb{Z}[\frac{1}{NM},\zeta_{NM}, \psi, \phi]$, where $\zeta_{NM}$ is a primitive $NM$-th root of unity. Since the $q$-expansions of $E_{\underline{\delta}}$ at all cusps have coefficients lying in $R$, the $q$-expansion principle \cite[Corollary 1.6.2]{katz_1973} implies that $E_{\underline{\delta}} \in M_k(\Gamma_1(NM), R)$ (the space of Katz modular forms of weight $k$ and level $\Gamma_1(NM)$ over $R$. See Chapter 1 of \cite{katz_1973} for more details).  Furthermore, Corollary \ref{cor:cusp} tells us that for each $\gamma\in\text{SL}_2(\mathbb{Z})$, the constant term of $E_{\underline{\delta}}[\gamma]_k$ is either $0$ or
\begin{equation*}
   a_0(E_{\underline{\delta}}[\gamma]_k) = C_{\gamma} \prod_{p \in \mathcal{P}_M} (1 - \psi(p) \phi^{-1}(p)p^{-k}) = (-1)^{|\mathcal{P}_M|}\frac{C_{\gamma}}{M^k}\prod_{p\in\mathcal{P}_M}(\psi(p)-\phi(p)p^k).
\end{equation*}
In either case $\text{ord}_{\lambda'}(a_0(E_{\underline{\delta}}[\gamma]_k)) > 0$, and so the mod $\lambda'$ reduction of $E_{\underline{\delta}}$ is a well defined cusp form $\overline{E} \in S_k(\Gamma_1(NM), \mathbb{F}_{\lambda^\prime})$. This is an eigenform having $T_q$ eigenvalue $\psi(q)+\phi(q)q^{k-1} \bmod \lambda'$ for each $q \nmid NM$. See \cite[Section 1.11]{katz_1973} for the definition of Hecke operator in the Katz setting and compatibility with classical Hecke operators. Note that since $\mathbb{F}_{\lambda^\prime} \subset \overline{\mathbb{F}}_l$ we can view $\overline{E} \in S_k(\Gamma_1(NM), \overline{\mathbb{F}}_l)$.

By \cite[Lemma 1.9]{edixhoven_1997} and the fact that $k > 2$, we have that the natural reduction map $S_k(\Gamma_1(NM), \overline{\mathbb{Z}}_l)\rightarrow S_k(\Gamma_1(NM), \overline{\mathbb{F}}_l)$ is surjective. It follows that $\overline{E}$ must be the reduction of some $g \in S_k(\Gamma_1(NM), \mathcal{O}_{\lambda^{\prime\prime}})$, with $\mathcal{O}_{\lambda^{\prime\prime}}$ the ring of integers of some finite extension $K_{\lambda^{\prime\prime}}$ of $\mathbb{Q}_l$. Note that $g$ is a characteristic zero cusp form, but is not necessarily an eigenform. However, an application of the Deligne-Serre lifting lemma \cite[Lemma 6.11]{deligne_1974} gives the existence of an eigenform $f^\prime \in S_k(\Gamma_1(NM), \mathcal{O}_{\lambda})$, with $\mathcal{O}_\lambda$ the ring of integers of some finite extension $K_\lambda$ of $K_{\lambda^{\prime\prime}}$, whose eigenvalues are congruent to $\psi(q) + \phi(q)q^{k-1} \bmod \lambda$ for all $q \nmid NM$. Finally, Carayol's lemma \cite[Proposition 1.10]{edixhoven_1997} shows that such an $f^\prime$ arises from an eigenform $f \in S_k(\Gamma_0(NM), \tilde{\chi})$ (as opposed to some $f \in S_k(\Gamma_0(NM), \chi^\prime)$ with $\chi^\prime \equiv \tilde{\chi} \bmod \lambda$). This $f$ is the desired eigenform satisfying:
\begin{equation*}
    a_q(f) \equiv \psi(q) + \phi(q)q^{k-1} \bmod \lambda
\end{equation*}
for all $q \nmid NM$.
\end{proof}

Taking $M=1$ in Theorem \ref{thm:eigenform} gives a result of Dummigan \cite[Proposition 2.1]{dummigan_2007}. As remarked in the paper, the eigenform satisfying the congruence must be new (since $\chi$ has conductor $N$). This completes the proof of Conjecture \ref{conj:general} in the case $M=1$. We will now see that the case $M=p$ prime can also be fully proved.

\begin{theorem}
\label{thm:prime}
 If $M=p$ is prime then Conjecture \ref{conj:general} is true.
\end{theorem}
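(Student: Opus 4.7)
The reverse implication is already contained in Theorem \ref{thm:reverse}, so only the forward direction requires argument when $M = p$. The starting point is Theorem \ref{thm:eigenform}: condition (1) produces an eigenform $f \in S_k(\Gamma_0(Np), \tilde{\chi})$ satisfying the Eisenstein congruence, and the proof (which runs the construction on the Eisenstein series $E_{\underline{\delta}}$ with $\delta_p = \psi(p)$) gives moreover $a_p(f) \equiv \phi(p)p^{k-1} \bmod \lambda$. If $f$ is new, we are done. Otherwise $f$ is a $p$-stabilization of a newform $g \in S_k^{\text{new}}(\Gamma_0(N), \chi)$ satisfying the Eisenstein congruence at all primes $q \nmid N$. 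Applying the $M=1$ case of Conjecture \ref{conj:general} (which follows from Theorem \ref{thm:eigenform} with $M=1$, using that the resulting eigenform is automatically new because $\chi$ has conductor exactly $N$), the existence of $g$ forces $\text{ord}_{\lambda'}(L(1-k, \psi^{-1}\phi)) > 0$.

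Working now in this ``$L$-value'' regime, I would rerun the construction of Theorem \ref{thm:eigenform} with the alternative choice $\delta_p = \phi(p)p^{k-1}$. The resulting Eisenstein series $E_{\underline{\delta}'}$ has $T_p$-eigenvalue $\psi(p)$ by Lemma \ref{lemma:E}, and Corollary \ref{cor:cusp} exhibits each of its constant terms as a $\lambda'$-integer multiple of $L(1-k, \psi^{-1}\phi)$ (here one uses $l \nmid p$, together with the fact that the Gauss sum factors and values of $\psi, \phi$ are units in $\mathbb{Z}[\psi,\phi]_{\lambda'}$). Consequently $E_{\underline{\delta}'}$ is a cusp form mod $\lambda'$, and Deligne--Serre together with Carayol's lemma yield a second Eisenstein-congruent eigenform $f' \in S_k(\Gamma_0(Np), \tilde{\chi})$ with $a_p(f') \equiv \psi(p) \bmod \lambda$.

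The remaining, and decisive, step is to upgrade $f$ or $f'$ (or a suitable third eigenform) to a genuine $p$-newform. This is precisely where condition (2) enters: as in the local analysis of Theorem \ref{thm:reverse}, condition (2) asserts that the composition factors of the residual representation $\bar{\psi} \oplus \bar{\phi}\chi_l^{k-1}$ restricted to $W_{\mathbb{Q}_p}$ differ by $\chi_l^{\pm 1}$ mod $\lambda$, which is exactly the Steinberg signature of a $p$-new form at level $Np$. The plan is then to combine this local Galois input with a dimension count of the Eisenstein-congruent eigenspace of $S_k(\Gamma_0(Np), \tilde{\chi})$: each Eisenstein-congruent newform at level $N$ accounts for exactly two oldforms at level $Np$ (one with each admissible $U_p$-reduction), and condition (2) is used to show that the total Eisenstein-congruent multiplicity in the Hecke algebra strictly exceeds this old contribution, forcing the existence of a $p$-newform. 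The main obstacle is precisely this last step, namely converting condition (2) into an honest existence statement rather than merely an accounting of oldforms; the argument is effectively a level-raising statement for residually reducible representations in the style of \cite{billerey_menares_2016} and \cite{dummigan_fretwell_2014}, carried out here with nontrivial characters $\psi, \phi$ so that the Gauss sum data appearing in Corollary \ref{cor:cusp} must be tracked with care.
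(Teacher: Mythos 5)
The reverse implication and the opening steps (invoke Theorem \ref{thm:eigenform}; if the resulting eigenform $f_0$ is old, it comes from a newform $f_1 \in S_k^{\text{new}}(\Gamma_0(N),\chi)$ because $\tilde{\chi}$ has conductor $N$) match the paper. But you then abandon the direct line of attack and leave the essential step unfinished, as you yourself concede: ``The main obstacle is precisely this last step, namely converting condition (2) into an honest existence statement.'' That is a genuine gap, not a deferred detail. Your proposed replacement, a dimension count of the Eisenstein-congruent eigenspace comparing old contributions to the total multiplicity, is never made precise and is not obviously workable; nothing in Conditions (1)--(2) gives you control over multiplicities in the Hecke algebra.

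What the paper actually does here is much more direct and completely avoids your case-split on whether $\text{ord}_{\lambda'}(L(1-k,\psi^{-1}\phi))>0$, as well as the construction of the second eigenform $f'$. From the congruence and Cebotarev one has $\overline{\rho}_{f_1,\lambda_0}^{\text{ss}} \sim \overline{\psi}\oplus\overline{\phi}\chi_l^{k-1}$, and since $l\neq p$ this gives $a_p(f_1)\equiv \psi(p)+\phi(p)p^{k-1}\bmod\lambda_0$. Condition (2) says that $\psi(p)\equiv\phi(p)p^k$ or $\psi(p)\equiv\phi(p)p^{k-2}\bmod\lambda_0$; in either case a short computation shows
\[
a_p(f_1)^2 \equiv \chi(p)p^{k-2}(1+p)^2 \bmod \lambda_0,
\]
which is exactly Diamond's level-raising criterion. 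Diamond's theorem \cite{diamond_1991} then produces a $p$-newform $f\in S_k^{p\text{-new}}(\Gamma_1(Np),\tilde{\chi})$ congruent to $f_1$ away from $Npl$, and since $\tilde{\chi}$ has conductor $N$ with $p\nmid N$ this $f$ is genuinely new at level $Np$. You correctly sensed that the crux is ``a level-raising statement'' and that Condition (2) encodes a Steinberg-type local condition at $p$, but you did not verify Diamond's explicit congruence condition nor invoke his theorem, and without that the proof does not close.
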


\begin{proof}
Theorem \ref{thm:reverse} provides the reverse implication and so it suffices to prove the direct implication. By Theorem \ref{thm:eigenform}, Condition (1) provides a level $Np$ eigenform $f_0\in S_k(\Gamma_0(Np), \tilde{\chi})$ and a prime $\lambda_0\mid \lambda'$ of $\mathcal{O}_f[\psi,\phi]$ satisfying \[a_q(f_0) \equiv \psi(q)+\phi(q)q^{k-1} \ \text{mod } \lambda_0,\] for all $q\nmid Np$. We may assume that $f_0$ is an oldform, otherwise we are done. Since $\tilde{\chi}$ has conductor $N$, $f_0$ must be a lift of an eigenform $f_1\in S_k^{\text{new}}(\Gamma_0(N),\chi)$. By the Cebotarev density theorem, we have that $\overline{\rho}_{f_1,\lambda_0}\sim\overline{\rho}_{f_0,\lambda_0}$. As earlier, the congruence implies that $\overline{\rho}_{f_0,\lambda_0}^{\text{ss}}\sim\overline{\psi} \oplus \overline{\phi} \chi_l^{k-1}$. Since $l \neq p$ we see that:
\begin{equation*}
    a_p(f_1) \equiv \psi(p) + \phi(p)p^{k-1} \ \text{mod } \lambda_0.
\end{equation*} 

\noindent By Condition (2), we also have that one of the following holds: \begin{align*}\psi(p) &\equiv \phi(p)p^k \ \text{mod } \lambda_0\\ \psi(p) &\equiv \phi(p)p^{k-2} \ \text{mod } \lambda_0,\end{align*} so that
\begin{equation*}
    a_p(f_1) \equiv \psi(p)+\phi(p)p^{k-1} \equiv 
    \begin{cases}
    \psi(p)(1 + p^{-1}) \ \text{mod } \lambda_0 & \text{if } \psi(p) \equiv \phi(p)p^k \ \text{mod } \lambda_0 \\
    \psi(p)(1 + p) \ \text{mod } \lambda_0 & \text{if } \psi(p) \equiv \phi(p)p^{k-2} \ \text{mod } \lambda_0 
    \end{cases}
\end{equation*}

We now claim that the following congruence condition holds:
\begin{equation}
\label{eq:diamond}
    a_p(f_1)^2 \equiv \chi(p)p^{k-2}(1+p)^2 \ \text{mod } \lambda_0.
\end{equation}
Indeed, if $\psi(p) \equiv \phi(p)p^k \ \text{mod } \lambda_0$ then
\begin{align*}
    \chi(p)p^{k-2}(1+p)^2 \equiv \psi(p)\phi(p)p^{k-2}(1+p)^2
    \equiv \psi^2(p)p^{-2}(1+p)^2
    &\equiv \psi^2(p)(1+p^{-1})^2 \\ &\equiv a_p(f_1)^2 \bmod \lambda_0
\end{align*}
Alternatively, if $\psi(p) \equiv \phi(p)p^{k-2} \ \text{mod } \lambda_0$ then
\begin{align*}
 \chi(p)p^{k-2}(1+p)^2 \equiv \psi(p)\phi(p)p^{k-2}(1+p)^2 
    &\equiv \psi^2(p)(1+p)^2 \\ &\equiv a_p(f_1)^2 \bmod  \lambda_0.
\end{align*}
A theorem of Diamond \cite[Theorem 1]{diamond_1991} now implies the existence of a normalised $p$-newform $f \in S_k^{p\text{-new}}(\Gamma_0(Np), \tilde{\chi})$ and a prime $\lambda \mid \lambda_0$ of $\mathcal{O}_{f_0,f}[\psi,\phi]$ satisfying
\begin{equation*}
    a_q(f) \equiv a_q(f_1) \ \text{mod } \lambda
\end{equation*}
for all primes $q\nmid Np$ (the conditions on $l$ imply that $l\mid\frac{1}{2}\varphi(N)Np(k-2)!$). In fact, we must have that $f\in S_k^{\text{new}}(\Gamma_0(Np),\tilde{\chi})$, since $\tilde{\chi}$ has conductor $N$ and $p\nmid N$. This newform satisfies the required congruence by construction.
\end{proof}

The above argument highlights the bottleneck in trying to prove the direct implication in general. Conditions (1) and (2) still imply the level raising condition for each $p\mid M$, but this only allows us to find ``local" newforms satisfying the congruence, i.e.\ a $p$-newform for each $p|M$. There seems to be no clear way to prove the existence of a ``global" newform satisfying the congruence.

\subsection{Comparison with known results}

In the special case of $N=1$ (so that $\chi = \mathds{1}_1$) Conjecture \ref{conj:general} agrees with Conjecture $4.1$ of Dummigan and Fretwell \cite{dummigan_fretwell_2014} and Conjecture $3.2$ of Billerey and Menares \cite{billerey_menares_2016}.

In the special case of arbitrary square-free $N$ and $M=p$ prime, Theorem \ref{thm:eigenform} becomes Theorem $3.0.1$ of Spencer's thesis \cite{spencer_2018}. Newform congruences were not explored in this thesis, and so Theorem \ref{thm:prime} complements this Theorem well.

\subsection{Low weight}

For weight $k=2$, the analogue of Theorem \ref{thm:eigenform} is expected to be true in the case $N> 1$. However, when $N=1$ the condition can fail to provide an eigenform congruence. For example, when $N=1$ and $M = p\geq 5$ is prime then a famous result of Mazur \cite{mazur} says that eigenform congruences only arise when $\text{ord}_{l}\left(\frac{p-1}{12}\right)>0$, as opposed to $\text{ord}_{l}(p^2-1)>0$. Work of Ribet and Yoo considers results for more general levels \cite{ribetyoo}.

Conjecture \ref{conj:general} is also invalid in general for weight $k=2$. Even when an eigenform congruence exists there may not exist a newform satisfying the congruence, despite the fact that it is possible for Condition (2) to be automatically satisfed. It would be interesting to see what the analogue of Conjecture \ref{conj:general} is in this case.

It would be very interesting to see if there are analogues of our results for weight $1$ modular forms. The existence of such eigenform congruences has been studied for $M=p$ prime in \cite{spencer_2018}, but very little seems to be known beyond this.

\section{Relation with the Bloch-Kato Conjecture}
\label{section:BK}
In this section, we relate Conjecture \ref{conj:general} to the Bloch-Kato Conjecture. Throughout we assume Conditions (1) and (2) and fix an eigenform $f\in S_k(\Gamma_0(NM), \tilde{\chi})$ satisfying the congruence mod $\lambda$ (guaranteed to exist by Theorem \ref{thm:eigenform}).

As earlier, the congruence implies that the composition factors of $\overline{\rho}_{f,\lambda}$ are given by: \[\overline{\rho}_{f,\lambda}^{\text{ss}} \sim \overline{\psi}\oplus\overline{\phi}\chi_l^{k-1},\] realised on the one dimensional $\mathbb{F}_\lambda[G_{\mathbb{Q}}]$-modules $\mathbb{F}_\lambda(\psi)$ and $\mathbb{F}_\lambda (1-k)(\phi)$ , a Tate twist of $\mathbb{F}_{\lambda}(\phi)$, i.e.\ $\text{Frob}_p$ acts by multiplication by $\phi(p)p^{1-k} \bmod \lambda$ if $\lambda\nmid p$. 

By a result of Ribet \cite{ribet_1976}, we can choose the invariant $\mathcal{O}_{f,\lambda}$-lattice defining $\rho_{f,\lambda}$ in such a way that $\overline{\rho}_{f,\lambda}$ is realised on an $\mathbb{F}_{\lambda}$-vector space $V$ such that
\begin{equation*}
    0 \longrightarrow \mathbb{F}_\lambda(1-k)(\phi) \overset{\iota}\longrightarrow V \overset{\pi}\longrightarrow \mathbb{F}_\lambda(\psi) \longrightarrow 0
\end{equation*}
is a non-split extension of $\mathbb{F}_\lambda[G_{\mathbb{Q}}]$-modules. Twisting by $\psi^{-1}$ gives a non-split extension $V(\psi^{-1})$ which defines a non-trivial class: \[c\in H^1(\mathbb{Q},\mathbb{F}_{\lambda}(1-k)(\psi^{-1}\phi)) = \text{Ext}_{G_{\mathbb{Q}}}^1(\mathbb{F}_{\lambda},\mathbb{F}_{\lambda}(1-k)(\psi^{-1}\phi)).\]

Consider the $\mathbb{F}_\lambda[G_{\mathbb{Q}}]$-module $A_{f,\lambda}^{\psi,\phi}= (K_f[\psi,\phi]_{\lambda} / \mathcal{O}_f[\psi,\phi]_{\lambda})(1-k)(\psi^{-1}\phi)$ and let $A_f^{\psi,\phi}[\lambda]$ be the kernel of multiplication by $\lambda$ (abusing notation slightly, we let $\lambda$ be a uniformiser). It follows that \[A_f^{\psi,\phi}[\lambda] = \left(\frac{1}{\lambda}\mathcal{O}_f[\psi,\phi]_{\lambda}/\mathcal{O}_f[\psi,\phi]_{\lambda}\right)(1-k)(\psi^{-1}\phi)\cong \mathbb{F}_{\lambda}(1-k)(\psi^{-1}\phi)\] and so we may view $c$ as a class in $H^1(\mathbb{Q},A_f^{\psi,\phi}[\lambda])$. The following short exact sequence of $\mathbb{F}_{\lambda}[G_{\mathbb{Q}}]$-modules:
\begin{equation*}
    0 \longrightarrow A_f^{\psi,\phi}[\lambda] \overset{i}\longrightarrow A_{f,\lambda}^{\psi,\phi} \overset{\lambda}\longrightarrow A_{f,\lambda}^{\psi,\phi} \longrightarrow 0
\end{equation*}
induces a long exact sequence in Galois cohomology, a piece of which is the following:
\begin{equation*}
    H^0(\mathbb{Q}, A_{f,\lambda}^{\psi,\phi}) \overset{\delta}\longrightarrow H^1(\mathbb{Q}, A_f^{\psi,\phi}[\lambda]) \overset{i_*}\longrightarrow H^1(\mathbb{Q}, A_{f,\lambda}^{\psi,\phi}).
\end{equation*}

\noindent Note that since $l> k+1$ we have that $(l-1)\nmid (k-1)$, and so $H^0(\mathbb{Q}, A_{f,\lambda}^{\psi,\phi})$ is trivial. It follows that $i_*$ is injective, so that we can lift $c$ to a non-trivial class $c' = i_*(c) \in H^1(\mathbb{Q}, A_{f,\lambda}^{\psi,\phi})$.

The aim is to show that $c'$ is a non-trivial element of a Bloch-Kato Selmer group, which we now define (as in \cite[\S 3]{bloch_kato_2007}). First let $B_{f,\lambda}^{\psi,\phi} = K_f[\psi,\phi]_{\lambda}(1-k)(\psi^{-1}\phi)$. For a prime $q \neq l$ we define:

    \[H^1_f(\mathbb{Q}_q, B_{f,\lambda}^{\psi,\phi}) = \text{ker}(H^1(D_q, B_{f,\lambda}^{\psi,\phi})
    \longrightarrow H^1(I_q, B_{f,\lambda}^{\psi,\phi})),\]

\noindent where $I_q\subset D_q\subset G_{\mathbb{Q}_q}$ are inertia and decomposition subgroups at $q$. The cohomology is taken with respect to continuous cocycles and coboundaries. Note also that the $f$ on the LHS is standard notation and is not related to the modular form $f$. When $q=l$ we define

    \[H^1_f(\mathbb{Q}_l, B_{f,\lambda}^{\psi,\phi}) = \text{ker}(H^1(D_l, B_{f,\lambda}^{\psi,\phi}) \longrightarrow H^1(I_l, B_{f,\lambda}^{\psi,\phi} \otimes_{\mathbb{Q}_l} B_{\text{crys}} )).\]

\noindent See \cite[\S 1]{bloch_kato_2007} for the definition of Fontaine's ring $B_{\text{crys}}$. The Bloch-Kato Selmer group of $B_{f,\lambda}^{\psi,\phi}$ is then $H^1_f(\mathbb{Q}, B_{f,\lambda}^{\psi,\phi})$, the subgroup of $H^1(\mathbb{Q}, B_{f,\lambda}^{\psi,\phi})$ consisting of classes which have local restriction lying in $H^1_f(\mathbb{Q}_q, B_{f,\lambda}^{\psi,\phi})$ for all primes $q$. 

Letting $\pi: B_{f,\lambda}^{\psi,\phi} \longrightarrow A_{f,\lambda}^{\psi,\phi}$ be the quotient map, we may also define the pushforward $H^1_f(\mathbb{Q}_q, A_{f,\lambda}^{\psi,\phi}) = \pi_*H^1_f(\mathbb{Q}_q,B_{f,\lambda}^{\psi,\phi})$. The Bloch-Kato Selmer group of $A_{f,\lambda}^{\psi,\phi}$ is then $H^1_f(\mathbb{Q}, A_{f,\lambda}^{\psi,\phi})$, the subgroup of $H^1(\mathbb{Q}, A_{f,\lambda}^{\psi,\phi})$ consisting of classes whose local restrictions lie in $H^1_f(\mathbb{Q}_q, A_{f,\lambda}^{\psi,\phi})$ for all primes $q$. Note that, since $l \nmid 2$ we may omit $q=\infty$. 

More generally, given a finite set of primes $\mathcal{P}$ with $l \notin \mathcal{P}$, we define $H^1_\mathcal{P}(\mathbb{Q}, A_{f,\lambda}^{\psi,\phi})$ to be the subgroup of  $H^1(\mathbb{Q}, A_{f,\lambda}^{\psi,\phi})$ consisting of classes whose local restrictions lie in $H^1_f(\mathbb{Q}_q, A_{f,\lambda}^{\psi,\phi})$ for all primes $q \notin \mathcal{P}$.

\begin{propn}\label{prop:BK}
    The congruence satisfied by $f$ gives the existence of a non-trivial element $c' \in H^1_{\mathcal{P}_{NM}}(\mathbb{Q}, A_{f,\lambda}^{\psi,\phi})$.
\end{propn}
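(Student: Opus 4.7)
The plan is to check, for each prime $q \notin \mathcal{P}_{NM}$, that the local restriction of the class $c'$ constructed above lies in $H^1_f(\mathbb{Q}_q, A_{f,\lambda}^{\psi,\phi})$. Non-triviality of $c'$ was already established in the construction (using $l > k+1$ to ensure $i_*$ is injective), so the proposition reduces to these local conditions. Since $l \notin \mathcal{P}_{NM}$ by hypothesis, the primes to check split into two groups: primes $q \nmid NMl$, and the prime $q = l$ itself.

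For a prime $q \nmid NMl$, Deligne's theorem gives that $\rho_{f,\lambda}$ is unramified at $q$, and hence so is the extension $V$ used to build $C$. Consequently the cocycle $C$ restricted to the inertia subgroup $I_q$ vanishes, so $c$ and its image $c' = i_*(c)$ are both unramified at $q$. For primes $q \neq l$ the finite part of local Galois cohomology coincides with the unramified part, and this is preserved when passing from $B_{f,\lambda}^{\psi,\phi}$ to $A_{f,\lambda}^{\psi,\phi}$ via the quotient map $\pi$, yielding $c'|_q \in H^1_f(\mathbb{Q}_q, A_{f,\lambda}^{\psi,\phi})$.

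The main obstacle is the crystalline condition at $q = l$. The key input is that $\rho_{f,\lambda}$ is crystalline at $l$, because $l \nmid NM$ makes $l$ a prime of good reduction and $l > k+1$ puts the Hodge-Tate weights $\{0, k-1\}$ safely inside the Fontaine-Laffaille range $[0, l-2]$. By Ribet's construction, $V$ arises as the mod-$\lambda$ reduction of a Galois-stable sublattice in $\rho_{f,\lambda}|_{G_{\mathbb{Q}_l}}$, so the defining short exact sequence of $V$ is the reduction of a short exact sequence of crystalline $G_{\mathbb{Q}_l}$-representations whose graded pieces are Tate twists of unramified characters. Exactness of the Fontaine-Laffaille functor on this data then implies that the class $c \in H^1(\mathbb{Q}_l, A_f^{\psi,\phi}[\lambda])$ lies in the crystalline (finite) subgroup; pushing forward via $i_*$ yields $c'|_l \in H^1_f(\mathbb{Q}_l, A_{f,\lambda}^{\psi,\phi})$. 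Combining both local verifications gives $c' \in H^1_{\mathcal{P}_{NM}}(\mathbb{Q}, A_{f,\lambda}^{\psi,\phi})$, as required; the crystalline check is the subtle step and relies essentially on the Fontaine-Laffaille machinery being applicable under the hypothesis $l > k+1$.
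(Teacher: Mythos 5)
Your proof is correct and follows essentially the same route as the paper: verify the unramified local condition at primes $q \nmid NMl$ using Deligne's theorem, and the crystalline condition at $q = l$ using $l > k+1$. The paper simply cites \cite[Lemma 7.4]{brown_2007} for the unramified implication and \cite[Proposition 2.2]{diamond_flach_guo_2004} for the crystalline step, whereas you unwind the latter by appealing directly to Fontaine--Laffaille theory and the Hodge--Tate range $[0,k-1] \subset [0,l-2]$; this is exactly the content of the cited result.
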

\begin{proof}
    We have that $\rho_{f,\lambda}$ is unramified at each $q\nmid NMl$. It follows that restriction of $c'$ to $H^1(I_q, A_{f,\lambda}^{\psi,\phi})$ is 0 for such $q$. Then by \cite[Lemma 7.4]{brown_2007}, $c' \in H^1_f(\mathbb{Q}_q, A_{f,\lambda}^{\psi,\phi})$. Under the assumption that $l > k+1$, the representation $\rho_{f,\lambda}$ is crystalline at $l$ and we deduce that $c' \in H^1_f(\mathbb{Q}_l, A_{f,\lambda}^{\psi,\phi})$, as a consequence of \cite[Proposition 2.2]{diamond_flach_guo_2004}. Since the necessary local conditions are satisfied, we have that $c' \in H^1_{\mathcal{P}_{NM}}(\mathbb{Q}, A_{f,\lambda}^{\psi,\phi}))$.
\end{proof}

Let $C_{k,l}^{\psi,\phi} = (\mathbb{Q}_l/\mathbb{Z}_l)(1-k)(\psi^{-1}\phi)$. Note that since $\lambda|l$, the module $A_{f,\lambda}^{\psi,\phi}$ decomposes as a direct sum of copies of $C_{k,l}^{\psi,\phi}$. Proposition \ref{prop:BK} then implies the existence of a non-trivial element $c'\in H^1_{\mathcal{P}_{NM}}(\mathbb{Q},C_{k,l}^{\psi,\phi})$ by projection. We now discuss how the existence of such an element agrees with the Bloch-Kato conjecture.

Consider the partial Dirichlet $L$-value $L_{\mathcal{P}_{NM}}(k, \psi\phi^{-1})$, i.e. with Euler factors at primes $q \in \mathcal{P}_{NM}$ omitted. Letting $\lambda'$ be as in Condition (1) of Conjecture \ref{conj:general}, below is a reformulation of a special case of the $\lambda'$-part of the Bloch-Kato conjecture (as in \cite{diamond_flach_guo_2004}, proved in this case by Huber and Kings in \cite{huber_kings_2003}).

\begin{conj}
\label{conj:BK}
\[\text{ord}_{\lambda'}\left( \frac{L_{\mathcal{P}_{NM}}(k, \psi\phi^{-1})}{g(\psi\phi^{-1})(2\pi i)^k}\right)
   = \text{ord}_{\lambda'} \left( \frac{\text{Tam}^0_\lambda(C_{k,l}^{\psi,\phi}) \#H^1_{\mathcal{P}_{NM}}(\mathbb{Q},C_{k,l}^{\psi,\phi})}{\#H^0(\mathbb{Q},C_{k,l}^{\psi,\phi})} \right).\]
\end{conj}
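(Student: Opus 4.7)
The plan is to obtain this identity as an instance of the $\lambda'$-part of the Bloch--Kato Tamagawa number conjecture for Dirichlet motives, following the Iwasawa-theoretic strategy of Huber--Kings. The essential inputs are the Mazur--Wiles theorem (the main conjecture of Iwasawa theory for abelian characters of $\mathbb{Q}$), Soul\'e's cyclotomic elements in \'etale cohomology as an Euler system, and the explicit reciprocity law linking those elements to values of Kubota--Leopoldt $p$-adic $L$-functions.

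First I would translate both sides into a common Iwasawa-theoretic form. On the analytic side, the ratio $L_{\mathcal{P}_{NM}}(k,\psi\phi^{-1})/g(\psi\phi^{-1})(2\pi i)^k$ lies in $\overline{\mathbb{Q}}$ by Euler's rationality result for Dirichlet $L$-values at positive integers, and up to a $\lambda'$-unit it is a value of the Kubota--Leopoldt $p$-adic $L$-function attached to $\psi^{-1}\phi$, with Euler factors at $\mathcal{P}_{NM}$ removed. On the arithmetic side, the hypothesis $l>k+1$ forces $H^0(\mathbb{Q},C_{k,l}^{\psi,\phi})=0$ (as already used in the discussion preceding Proposition \ref{prop:BK}), and Pontryagin duality together with a standard control theorem identify $\#H^1_{\mathcal{P}_{NM}}(\mathbb{Q},C_{k,l}^{\psi,\phi})$ with the order of the $\Gamma$-coinvariants of the $(\psi^{-1}\phi)$-isotypic component of the maximal abelian $l$-extension of $\mathbb{Q}(\mu_{l^\infty})$ unramified outside $\mathcal{P}_{NM}\cup\{l\}$.

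Next I would invoke Mazur--Wiles to identify the characteristic ideal of this Iwasawa module with the appropriate branch of the $p$-adic $L$-function, and then specialise at the Tate twist $1-k$. This converts the characteristic-ideal equality into an equality of $\lambda'$-lengths, producing the ``global'' half of the claim up to the correction by $\text{Tam}^0_\lambda(C_{k,l}^{\psi,\phi})$. This Tamagawa factor is defined as a product of local contributions at primes in $\mathcal{P}_{NM}$ precisely so as to cancel the stripped-off Euler factors, and the verification is carried out prime by prime using the local Bloch--Kato exponential together with an explicit description of the quotient $H^1(D_q,C_{k,l}^{\psi,\phi})/H^1_f(D_q,C_{k,l}^{\psi,\phi})$.

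The main obstacle is this local matching. At an unramified prime $q\in\mathcal{P}_{NM}$ the local computation is clean and returns $1-\psi^{-1}\phi(q)q^{-k}$, but when $\psi$ or $\phi$ ramifies at $q$ the local cohomology acquires further torsion contributions which must be tracked so that the Tamagawa factor cancels the ramified Euler contribution exactly, with no leftover $\lambda'$-valuation. The local condition at $l$ is equally delicate: one must check that the crystalline version of $H^1_f(\mathbb{Q}_l,-)$ used in defining the Selmer group is compatible with the de Rham comparison underlying the Kubota--Leopoldt construction, so that the $\lambda'$-adic interpolation matches the motivic normalisation on the left of the identity. Once these local accountings are in place, combining them with the vanishing of $H^0$ and the Iwasawa main conjecture yields the stated equality of $\lambda'$-valuations.
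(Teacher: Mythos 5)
The statement you are addressing is labelled a conjecture in the paper, and the paper itself contains no proof of it: immediately before stating it, the authors describe it as ``a reformulation of a special case of the $\lambda'$-part of the Bloch-Kato conjecture (as in \cite{diamond_flach_guo_2004}, proved in this case by Huber and Kings in \cite{huber_kings_2003}).'' So the correct thing to do here is to cite Huber--Kings (and the dictionary in Diamond--Flach--Guo) rather than to attempt an independent derivation.

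That said, the route you outline --- passing to the Kubota--Leopoldt $p$-adic $L$-function, invoking the Mazur--Wiles main conjecture to control the characteristic ideal of the relevant Iwasawa module, and using Soul\'e's cyclotomic elements together with the Bloch--Kato exponential / explicit reciprocity to transfer this to the Selmer group and to the motivic normalisation --- is exactly the strategy carried out by Huber and Kings. So your sketch is aimed at the right result by the right method. The issue is that it is a roadmap, not an argument. The two places you yourself flag as ``the main obstacle'' and ``equally delicate'' --- the prime-by-prime comparison of local Tamagawa factors with stripped Euler factors, especially at ramified primes $q\mid N$, and the compatibility of the crystalline local condition at $l$ with the $p$-adic interpolation --- are precisely the technical content of the theorem; acknowledging them is not resolving them. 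In addition, you state without justification that Pontryagin duality plus a control theorem identifies $\#H^1_{\mathcal{P}_{NM}}(\mathbb{Q},C^{\psi,\phi}_{k,l})$ with the size of certain $\Gamma$-coinvariants; this descent step requires the vanishing of higher Iwasawa cohomology and a comparison of the ``finite'' local conditions with the greatest/least relaxed conditions appearing in the main conjecture, and none of that is checked. Finally note a small but real point: under the running hypothesis $l>k+1$ and $\lambda'\mid l$, the paper immediately observes that $\text{Tam}^0_\lambda(C^{\psi,\phi}_{k,l})$ is trivial (by \cite[Theorem 4.1.1(iii)]{bloch_kato_2007}) and $H^0(\mathbb{Q},C^{\psi,\phi}_{k,l})=0$, so in the regime actually used the conjecture collapses to an equality of the $L$-value valuation with $\text{ord}_{\lambda'}\#H^1_{\mathcal{P}_{NM}}(\mathbb{Q},C^{\psi,\phi}_{k,l})$; your proposal expends effort tracking a Tamagawa factor that the paper has already shown to be a unit.
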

 \noindent We omit the definition of the Tamagawa factor $\text{Tam}^0_\lambda(C_{k,l}^{\psi,\phi})$, but note that it is trivial in this case since $l > k+1$ and $\lambda' \mid l$, by \cite[Theorem 4.1.1(iii)]{bloch_kato_2007}. We also know that $H^0(\mathbb{Q},C_{k,l}^{\psi,\phi}))$ is trivial, and so \[\text{ord}_{\lambda'}\left( \frac{L_{\mathcal{P}_{NM}}(k, \psi\phi^{-1})}{g(\psi\phi^{-1})(2\pi i)^k}\right)=\text{ord}_{\lambda'}(\#H^1_{\mathcal{P}_{NM}}(\mathbb{Q},C_{k,l}^{\psi,\phi})).\] Hence if we can show that $\lambda'$ divides the partial $L$-value then we know there is a non-trivial element in the Bloch-Kato Selmer group.

\begin{propn}
\label{propn:divideL}
     Condition (1) of Conjecture \ref{conj:general} implies the condition \[\text{ord}_{\lambda'}\left( \frac{L_{\mathcal{P}_{NM}}(k, \psi\phi^{-1})}{g(\psi\phi^{-1})(2\pi i)^k}\right)>0.\]
\end{propn}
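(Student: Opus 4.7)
The plan is to convert Condition (1), which features $L(1-k,\psi^{-1}\phi)$, into a statement about $L(k,\psi\phi^{-1})$ using the Dirichlet functional equation, and to convert the Euler factors $\psi(p)-\phi(p)p^k$ into the omitted Euler factors of $L_{\mathcal{P}_{NM}}(k,\psi\phi^{-1})$ up to units at $\lambda'$. Throughout, the hypotheses $l\nmid NM$ and $l>k+1$ (so in particular $l\nmid (k-1)!$ and $l\nmid 2$) ensure that various ``junk'' factors are $\lambda'$-units and can be discarded when comparing orders.

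First I would apply the functional equation for primitive Dirichlet $L$-functions to the character $\eta=\psi\phi^{-1}$ (which, since $N=uv$ is squarefree with $u,v$ coprime, has conductor exactly equal to $\mathrm{cond}(\psi^{-1}\phi)$, a divisor of $N$). This gives an identity of the shape
\begin{equation*}
L(1-k,\psi^{-1}\phi) \;=\; u_1\cdot\frac{g(\psi^{-1}\phi)}{(2\pi i)^k}\cdot L(k,\psi\phi^{-1}),
\end{equation*}
where $u_1$ is a product of $(k-1)!$, a power of the conductor, and a sign (coming from $(\psi^{-1}\phi)(-1)=(-1)^k$). Because $l\nmid NM$ and $l>k+1$, every factor in $u_1$ is a $\lambda'$-unit, so $\mathrm{ord}_{\lambda'}(u_1)=0$.

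Next I would use the standard Gauss sum relation $g(\psi^{-1}\phi)\,g(\psi\phi^{-1}) = (\psi^{-1}\phi)(-1)\cdot\mathrm{cond}(\psi^{-1}\phi)$ to replace $g(\psi^{-1}\phi)$ by $u_2/g(\psi\phi^{-1})$, where $u_2$ is again a $\lambda'$-unit (since the conductor divides $N$ and $\lambda'\nmid N$). Substituting, I obtain
\begin{equation*}
\mathrm{ord}_{\lambda'}\!\left(\frac{L(k,\psi\phi^{-1})}{g(\psi\phi^{-1})(2\pi i)^k}\right)
 \;=\; \mathrm{ord}_{\lambda'}\!\bigl(L(1-k,\psi^{-1}\phi)\bigr).
\end{equation*}
For the Euler factors, note that for $p\in\mathcal{P}_N$ the character $\psi\phi^{-1}$ vanishes at $p$, so only the primes $p\in\mathcal{P}_M$ contribute nontrivially to $L_{\mathcal{P}_{NM}}/L$. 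For $p\in\mathcal{P}_M$, the identity
\begin{equation*}
1-(\psi\phi^{-1})(p)p^{-k} \;=\; -\frac{\psi(p)-\phi(p)p^k}{\phi(p)p^k}
\end{equation*}
combined with the fact that $\phi(p)p^k$ is a $\lambda'$-unit (as $l\neq p$ and $\phi(p)$ is a root of unity) shows that
\begin{equation*}
\mathrm{ord}_{\lambda'}\!\bigl(1-(\psi\phi^{-1})(p)p^{-k}\bigr)\;=\;\mathrm{ord}_{\lambda'}\!\bigl(\psi(p)-\phi(p)p^k\bigr).
\end{equation*}

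Putting the two steps together, I would conclude
\begin{equation*}
\mathrm{ord}_{\lambda'}\!\left(\frac{L_{\mathcal{P}_{NM}}(k,\psi\phi^{-1})}{g(\psi\phi^{-1})(2\pi i)^k}\right)
\;=\; \mathrm{ord}_{\lambda'}\!\left(L(1-k,\psi^{-1}\phi)\prod_{p\in\mathcal{P}_M}(\psi(p)-\phi(p)p^k)\right),
\end{equation*}
and the right-hand side is positive by Condition (1). The only real obstacle is bookkeeping: making sure each of the rational/algebraic factors appearing in the functional equation and in the Gauss-sum relation is genuinely coprime to $\lambda'$, but this is a direct consequence of the standing hypotheses $l>k+1$ and $l\nmid NM$ together with the fact that $\mathrm{cond}(\psi\phi^{-1})\mid N$.
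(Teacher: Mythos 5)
Your proof is correct and follows essentially the same route as the paper's: apply the Dirichlet functional equation to swap $L(1-k,\psi^{-1}\phi)$ for $L(k,\psi\phi^{-1})$ up to $\lambda'$-unit ``junk'' (powers of $2\pi i$, $(k-1)!$, a power of the conductor, and the Gauss sum), and then match the omitted Euler factors of $L_{\mathcal{P}_{NM}}(k,\psi\phi^{-1})$ with $\prod_{p\in\mathcal{P}_M}(\psi(p)-\phi(p)p^k)$ via $1-(\psi\phi^{-1})(p)p^{-k}=-(\psi(p)-\phi(p)p^k)/(\phi(p)p^k)$. Your write-up is in fact a bit more careful than the paper's at two points: you note explicitly that the Euler factors at $p\in\mathcal{P}_N$ are trivial (since $\mathrm{cond}(\psi\phi^{-1})=N$ when $N=uv$ is squarefree with $u,v$ coprime), so only $p\in\mathcal{P}_M$ contribute; and you explicitly invoke $g(\psi^{-1}\phi)\,g(\psi\phi^{-1})=(\psi^{-1}\phi)(-1)\cdot\mathrm{cond}$ to reconcile the two Gauss sums, a step the paper compresses into a single displayed identity.
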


\begin{proof}
    By the functional equation for $L(s,\psi\phi^{-1})$ we have: 
    \begin{align*}
        \frac{L_{\mathcal{P}_{NM}}(k, \psi\phi^{-1})}{g(\psi\phi^{-1})(2\pi i)^k} &= \frac{(-1)^{|\mathcal{P}_{NM}|}}{\phi(NM)(NM)^k}\frac{L(k, \psi \phi^{-1})}{ g(\psi\phi^{-1})(2\pi i)^k}\prod_{p \in \mathcal{P}_{NM}} (\psi(p)-\phi(p)p^k)\\ &=\frac{(-1)^{|\mathcal{P}_m|+k}}{2(k-1)!\phi(NM)(N^2M)^k}L(1-k,\psi^{-1}\phi)\prod_{p \in \mathcal{P}_{NM}} (\psi(p)-\phi(p)p^k)
    \end{align*}
    The claim follows, since $l\nmid NM$ and $l>k+1$.
\end{proof}
The above shows that Condition (1) of Conjecture \ref{conj:general} provides a non-trivial element $c'\in H^1_{\mathcal{P}_{NM}}(\mathbb{Q},C_{k,l}^{\psi,\phi})$, and that this is in line with the Bloch-Kato Conjecture. Since Condition (2) is (conjecturally) telling us that a newform $f$ of level $NM$ can be found to satisfy the congruence, we might naively expect that the corresponding element $c'\in H^1_{
\mathcal{P}_{NM}}(\mathbb{Q},C_{k,l}^{\psi,\phi})$ is ``new", i.e.\ that $c'\notin H^1_{\mathcal{P}_{Nd}}(\mathbb{Q},C_{k,l}^{\psi,\phi})$ for each $d| M$ with $d\neq M$. However, this may not be the case, since considering the Bloch-Kato quotient for such a divisor $d$ gives: \begin{align*}
    \text{ord}_{\lambda^\prime}\left( \frac{ \#H^1_{\mathcal{P}_{NM}}(\mathbb{Q},C_{k,l}^{\psi,\phi})}{\#H^1_{\mathcal{P}_{Nd}}(\mathbb{Q},C_{k,l}^{\psi,\phi})}\right) &= \text{ord}_{\lambda^\prime}\left( \frac{L_{\mathcal{P}_{NM}}(k, \psi\phi^{-1})}{L_{\mathcal{P}_{Nd}}(k, \psi\phi^{-1})} \right)\\ &=\text{ord}_{\lambda^\prime}\left( \frac{\prod_{p \in \mathcal{P}_{M/d}}(\psi(p)-\phi(p)p^k)}{\phi(M/d)(M/d)^k}\right),
\end{align*}
revealing that new elements can only be accounted for by local divisibility conditions of the form $\text{ord}_{\lambda'}(\psi(p)-\phi(p)p^k)>0$ (as in Case (A) in the proof of Theorem \ref{thm:reverse}). It follows that the primes $p\in\mathcal{P}_M$ satisfying $\text{ord}_{\lambda'}(\psi(p)-\phi(p)p^{k-2})>0$ and $\text{ord}_{\lambda'}(\psi(p)-\phi(p)p^k)=0$ in Condition (2) cannot contribute towards new elements.
 
\begin{propn}\label{prop:BK1}
    Let $d|M$ and $d\neq M$. Then there exists a prime $p\in \mathcal{P}_{M/d}$ such that $\text{ord}_{\lambda'}(\psi(p)-\phi(p)p^k)>0$ if and only if \[\text{ord}_{\lambda'}\left(\frac{\#H^1_{\mathcal{P}_{NM}}(\mathbb{Q},C_{k,l}^{\psi,\phi})}{\#H^1_{\mathcal{P}_{Nd}}(\mathbb{Q},C_{k,l}^{\psi,\phi})}\right) > 0.\] In particular, $\text{ord}_{\lambda'}(\psi(p)-\phi(p)p^k)>0$ if and only if there exists an element $c'_p\in H^1_{\mathcal{P}_{NM}}(\mathbb{Q},C_{k,l}^{\psi,\phi})$ that is ``$p$-new", i.e.\ $c'_p\notin H^1_{\mathcal{P}_{NM/p}}(\mathbb{Q},C_{k,l}^{\psi,\phi})$.
    \end{propn}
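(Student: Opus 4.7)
The plan is to extract everything from the ratio formula displayed immediately above the proposition. First I would observe that the denominator $\phi(M/d)(M/d)^k$ appearing on the right-hand side of that display is a unit at $\lambda'$: indeed $\lambda' \mid l$ with $l \nmid NM$ (so $(M/d)^k$ is a $\lambda'$-unit), and for each $p \in \mathcal{P}_{M/d}$ the prime $p$ is coprime to $v \mid N$ (since $\gcd(N,M)=1$), so $\phi(p)$ is a nonzero root of unity and hence $\phi(M/d) = \prod_{p \in \mathcal{P}_{M/d}} \phi(p)$ is a $\lambda'$-unit as well.

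Taking $\lambda'$-orders of both sides of the displayed formula therefore yields
$$\text{ord}_{\lambda'}\!\left(\frac{\#H^1_{\mathcal{P}_{NM}}(\mathbb{Q},C_{k,l}^{\psi,\phi})}{\#H^1_{\mathcal{P}_{Nd}}(\mathbb{Q},C_{k,l}^{\psi,\phi})}\right) \;=\; \sum_{p \in \mathcal{P}_{M/d}} \text{ord}_{\lambda'}\bigl(\psi(p)-\phi(p)p^k\bigr).$$
Each summand is a nonnegative integer, since $\psi(p)-\phi(p)p^k$ lies in $\mathbb{Z}[\psi,\phi]$, so the sum is strictly positive if and only if at least one summand is strictly positive. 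This is exactly the first equivalence asserted in the proposition.

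For the ``in particular'' portion I would specialise to $d = M/p$, so that $\mathcal{P}_{M/d} = \{p\}$ and the sum collapses to the single term $\text{ord}_{\lambda'}(\psi(p)-\phi(p)p^k)$. Combined with the evident inclusion $H^1_{\mathcal{P}_{NM/p}}(\mathbb{Q},C_{k,l}^{\psi,\phi}) \subseteq H^1_{\mathcal{P}_{NM}}(\mathbb{Q},C_{k,l}^{\psi,\phi})$ (the local condition at $p$ is imposed for the former but removed for the latter), strict positivity of the $\lambda'$-order of the index is equivalent to the existence of a class $c'_p \in H^1_{\mathcal{P}_{NM}}(\mathbb{Q},C_{k,l}^{\psi,\phi})$ not lying in $H^1_{\mathcal{P}_{NM/p}}(\mathbb{Q},C_{k,l}^{\psi,\phi})$, i.e.\ a ``$p$-new'' element.

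The essential content is the ratio formula itself, which is already derived in the discussion preceding the proposition by invoking Conjecture \ref{conj:BK} (proved in this case by Huber--Kings) for both Selmer groups. Everything that remains is bookkeeping: separating a product into factors and tracking $\lambda'$-units. Consequently I do not expect any real obstacle; the only point that requires any care is verifying the unit claim for $\phi(M/d)(M/d)^k$ at $\lambda'$, which follows cleanly from $\gcd(N,M)=1$ and $l\nmid NM$.
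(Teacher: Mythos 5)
Your proof is correct and follows essentially the same route as the paper: the paper's proof simply says ``this follows from the above discussion,'' which is precisely the displayed ratio formula you invoke, and you supply the bookkeeping (the $\lambda'$-unit check on the denominator, non-negativity of each factor's $\lambda'$-order, and the specialisation $d = M/p$) that the paper leaves implicit.
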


    \begin{proof}
    This follows from the above discussion.
    \end{proof}

    \noindent The above should be roughly compared with the situation in Theorem \ref{thm:prime} and the proceeding discussion, where Conditions (1) and (2) alone were only able to guarantee the existence of $p$-newforms satisfying the congruence for each $p\in\mathcal{P}_M$, as opposed to a genuine newform. Based on this, we conjecture the following.

    \begin{conj}\label{conj:pnew}
    Let $\mathcal{S} = \{p\in \mathcal{P}_M\,|\,\text{ord}_{\lambda'}(\psi(p)-\phi(p)p^k)>0\}$. Then the class $c'\in H^1_{\mathcal{P}_{NM}}(\mathbb{Q},C_{k,l}^{\psi,\phi})$ provided by Proposition \ref{prop:BK} is ``$p$-new" for each $p\in \mathcal{S}$.
    \end{conj}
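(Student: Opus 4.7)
The plan is to show that for each $p \in \mathcal{S}$, the local restriction $c'|_{G_{\mathbb{Q}_p}}$ does not lie in the local Bloch--Kato subgroup $H^1_f(\mathbb{Q}_p, A_{f,\lambda}^{\psi,\phi})$; by the definition of $H^1_{\mathcal{P}_{NM/p}}(\mathbb{Q}, C_{k,l}^{\psi,\phi})$, this is precisely the statement that $c'$ is $p$-new. The core of the argument couples the local Galois structure of $\rho_{f,\lambda}$ at $p$ with the case dichotomy already performed in the proof of Theorem~\ref{thm:reverse}.

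Working under the hypothesis that $f$ can be chosen to be a genuine newform of level $NM$ (the open direct implication of Conjecture~\ref{conj:general}), for each $p \in \mathcal{P}_M$ we have $p \| NM$ with $\tilde{\chi}$ unramified at $p$, so that $\pi_p$ is an unramified twist of the Steinberg representation. Consequently $\rho_{f,\lambda}|_{G_{\mathbb{Q}_p}}$ is a genuine non-split reducible extension whose composition factors are an unramified character and its twist by $\chi_l$, and the extension itself is ramified. Specialising to $p \in \mathcal{S}$, we are in Case~(A) of the proof of Theorem~\ref{thm:reverse}, so the global Ribet filtration (with sub $\mathbb{F}_\lambda(1-k)(\phi)$ and quotient $\mathbb{F}_\lambda(\psi)$) induces the same ordering on the mod-$\lambda$ composition factors over $G_{\mathbb{Q}_p}$ as the local Steinberg filtration. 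Hence the restriction $V|_{G_{\mathbb{Q}_p}}$ of Ribet's global non-split extension realises (up to the natural identifications) the ramified local Steinberg extension, and $c|_{G_{\mathbb{Q}_p}}$ is a non-zero ramified class in $H^1(\mathbb{Q}_p, A_f^{\psi,\phi}[\lambda])$. By contrast, for $p \in \mathcal{P}_M \setminus \mathcal{S}$ we would be in Case~(B), where the orderings disagree and the restricted extension can split; this is the structural reason why the conjecture only addresses primes in $\mathcal{S}$.

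It remains to transfer this ramification through the pushforward $i_*$ to conclude $c'|_{G_{\mathbb{Q}_p}} \notin H^1_f(\mathbb{Q}_p, A_{f,\lambda}^{\psi,\phi})$. Since $p \neq l$, $H^1_f(\mathbb{Q}_p, A_{f,\lambda}^{\psi,\phi})$ is by definition the image under $\pi_*$ of the unramified subgroup $H^1_f(\mathbb{Q}_p, B_{f,\lambda}^{\psi,\phi})$. A standard diagram chase using the long exact sequence attached to
\[
    0 \longrightarrow A_f^{\psi,\phi}[\lambda] \overset{i}\longrightarrow A_{f,\lambda}^{\psi,\phi} \overset{\lambda}\longrightarrow A_{f,\lambda}^{\psi,\phi} \longrightarrow 0,
\]
together with the vanishing of the relevant $H^0$-terms (guaranteed by $l > k+1$, as already exploited in Section~\ref{section:BK}), then shows that an unramified $\lambda$-adic class cannot reduce to a ramified mod-$\lambda$ class, whence $c'|_{G_{\mathbb{Q}_p}}$ is not in $H^1_f(\mathbb{Q}_p, A_{f,\lambda}^{\psi,\phi})$.

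The principal obstacle is precisely the one highlighted after Theorem~\ref{thm:prime}: the argument rests on knowing that a genuine $NM$-newform satisfying the congruence exists, so that the local representation at $p$ is genuinely Steinberg rather than merely an unramified lift. When this is available (e.g., $M=1$, or $M=p$ prime via Theorem~\ref{thm:prime}, where the constructed newform is automatically $p$-new), the plan above goes through unchanged and Conjecture~\ref{conj:pnew} follows. In general, Conjecture~\ref{conj:pnew} should be viewed as the Bloch--Kato/cohomological shadow of the direct implication of Conjecture~\ref{conj:general}, with both expected to stand or fall together.
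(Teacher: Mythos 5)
This statement is labeled and treated as a \emph{conjecture} in the paper: the authors do not prove it, and only remark after Conjecture~\ref{conj:pnew2} that Theorem~\ref{thm:prime} implies the truth of these conjectures in the case $M=p$ prime, with no proof given. So there is no argument of the paper's to compare yours against. Your proposal is appropriately upfront that it rests on the (open) direct implication of Conjecture~\ref{conj:general}, and the overall strategy --- show $c'|_{G_{\mathbb{Q}_p}}$ is ramified via the Steinberg structure at $p$ --- is a natural thing to try.

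There is, however, a concrete gap at the pivotal step: the claim that in Case~(A) the Ribet filtration and the local Steinberg filtration are aligned. For a form that is special at $p$, the local $\lambda$-adic representation is (up to unramified twist) $\left(\begin{smallmatrix}\mu\chi_l^{k/2} & * \\ 0 & \mu\chi_l^{k/2-1}\end{smallmatrix}\right)$, with the \emph{sub} being the higher cyclotomic twist $\mu\chi_l^{k/2}$ (for $k=2$ this is just the familiar $0\to\mathbb{Z}_\ell(1)\to T_\ell E\to\mathbb{Z}_\ell\to 0$ for an elliptic curve with multiplicative reduction). In Case~(A) of the proof of Theorem~\ref{thm:reverse}, $\overline{\psi}|_{W_{\mathbb{Q}_p}}=\mu\chi_l^{k/2}$, so the local Steinberg sub is the $\psi$-piece, while the Ribet lattice was chosen globally so that the $\phi$-piece is the sub. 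Thus the filtrations are \emph{opposite} in Case~(A) (they align in Case~(B)). This is not a cosmetic slip: for any $G_{\mathbb{Q}}$-stable lattice $L$, the saturation of $L$ inside the unique $\lambda$-adic local sub reduces to a $G_{\mathbb{Q}_p}$-submodule of $\bar L$ isomorphic to $\mu\chi_l^{k/2}$, and if the Ribet lattice also has the $\phi$-piece as a submodule then (assuming $p\not\equiv 1\bmod l$) $\bar L|_{G_{\mathbb{Q}_p}}$ contains two distinct rank-one submodules and is therefore split. That would give $c'|_{G_{\mathbb{Q}_p}}=0$, hence $c'\in H^1_{\mathcal{P}_{NM/p}}$, i.e.\ exactly the opposite of $p$-newness. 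So the argument as written does not go through; in fact it surfaces a genuine tension that any proof of Conjecture~\ref{conj:pnew} would have to resolve, e.g.\ by analysing the $\lambda$-divisibility of the local extension class or by a more delicate choice of lattice. (Note also that your ``easy cases'' do not sidestep this: $M=1$ makes $\mathcal{P}_M=\emptyset$ so the statement is vacuous, and for $M=p$ the paper asserts but does not prove the conjecture.)
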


   Now suppose that $\text{ord}_{\lambda'}(\psi(p)-\phi(p)p^k) = 0$ for every $p\in\mathcal{P}_{M/d}$. In order to satisfy Condition (2) we would then have the following conditions for each $p\in\mathcal{P}_{M/d}$: \begin{align*}&\text{ord}_{\lambda'}(\psi(p)-\phi(p)p^{k-2})>0,\\  &\text{ord}_{\lambda'}(L(1-k,\psi^{-1}\phi))>0.\end{align*} Here, the situation is not fully clear (at least not to the authors). One conclusion that can be made in this case is the following.

    \begin{propn}\label{prop:BK2}
    Let $d|M$ and $d\neq M$. If $\text{ord}_{\lambda'}(\psi(p)-\phi(p)p^{k-2})>0$ for each $p\in\mathcal{P}_{M/d}$ then \[\text{ord}_{\lambda'}\left(\frac{\#H^1_{\mathcal{P}_{NM}}(\mathbb{Q},C_{k-2,l}^{\psi,\phi})}{\#H^1_{\mathcal{P}_{Nd}}(\mathbb{Q},C_{k-2,l}^{\psi,\phi})}\right) \geq \#\mathcal{P}_{M/d}.\]
    
    In particular, if $\text{ord}_{\lambda'}(\psi(p)-\phi(p)p^{k-2})>0$ then there exists an element $c_p{''}\in H^1_{\mathcal{P}_{NM}}(\mathbb{Q},C_{k-2,l}^{\psi,\phi})$ that is ``$p$-new", i.e.\ $c_p{''}\notin H^1_{\mathcal{P}_{NM/p}}(\mathbb{Q},C_{k-2,l}^{\psi,\phi})$.
    \end{propn}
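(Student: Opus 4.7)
The plan is to imitate the derivation leading up to Proposition \ref{prop:BK1}, replacing the Galois module $C_{k,l}^{\psi,\phi}$ by $C_{k-2,l}^{\psi,\phi}$ and applying the (known) Bloch-Kato conjecture for the character $\psi\phi^{-1}$ at $s=k-2$ in place of $s=k$. First I would check that the hypothesis $l>k+1$ also gives $l>(k-2)+1$, so that the local Tamagawa factor for $C_{k-2,l}^{\psi,\phi}$ at $l$ is trivial by \cite[Theorem 4.1.1(iii)]{bloch_kato_2007}, and the Huber-Kings theorem still applies. The ratio form of Conjecture \ref{conj:BK} (in which any $\#H^0$ and Tamagawa contributions cancel across the quotient) then yields
\[\text{ord}_{\lambda'}\left(\frac{\#H^1_{\mathcal{P}_{NM}}(\mathbb{Q},C_{k-2,l}^{\psi,\phi})}{\#H^1_{\mathcal{P}_{Nd}}(\mathbb{Q},C_{k-2,l}^{\psi,\phi})}\right)=\text{ord}_{\lambda'}\left(\frac{L_{\mathcal{P}_{NM}}(k-2,\psi\phi^{-1})}{L_{\mathcal{P}_{Nd}}(k-2,\psi\phi^{-1})}\right),\]
exactly as in the proof of Proposition \ref{prop:BK1} but with $k$ replaced by $k-2$.

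Next I would compute the $L$-ratio directly from the Euler product, obtaining
\[\frac{L_{\mathcal{P}_{NM}}(k-2,\psi\phi^{-1})}{L_{\mathcal{P}_{Nd}}(k-2,\psi\phi^{-1})}=\prod_{p\in\mathcal{P}_{M/d}}\bigl(1-\psi(p)\phi^{-1}(p)p^{-(k-2)}\bigr)=\prod_{p\in\mathcal{P}_{M/d}}\frac{\phi(p)p^{k-2}-\psi(p)}{\phi(p)p^{k-2}}.\]
Since $l\nmid NM$, each prime $p\in\mathcal{P}_{M/d}$ is a unit at $\lambda'$, and since $p$ is coprime to $N=uv$, the values $\psi(p),\phi(p)$ are roots of unity and hence units in $\mathbb{Z}[\psi,\phi]$. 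Consequently the $\lambda'$-valuation of each factor equals $\text{ord}_{\lambda'}(\psi(p)-\phi(p)p^{k-2})$, which is $\geq 1$ by hypothesis. Summing over $p\in\mathcal{P}_{M/d}$ gives the asserted lower bound of $\#\mathcal{P}_{M/d}$.

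The ``in particular'' clause follows by specialising to $d=M/p$, so that $\mathcal{P}_{M/d}=\{p\}$. The first assertion then reads
\[\text{ord}_{\lambda'}\left(\frac{\#H^1_{\mathcal{P}_{NM}}(\mathbb{Q},C_{k-2,l}^{\psi,\phi})}{\#H^1_{\mathcal{P}_{NM/p}}(\mathbb{Q},C_{k-2,l}^{\psi,\phi})}\right)\geq 1,\]
which forces the strict containment $H^1_{\mathcal{P}_{NM/p}}(\mathbb{Q},C_{k-2,l}^{\psi,\phi})\subsetneq H^1_{\mathcal{P}_{NM}}(\mathbb{Q},C_{k-2,l}^{\psi,\phi})$ and thus produces the desired $p$-new class $c_p''$.

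The main obstacle I anticipate is cleanly justifying the quotient form of the Bloch-Kato formula at weight $k-2$: the weight-$k$ version is invoked as a black box in Proposition \ref{prop:BK1}, and although the same template applies, one has to confirm that all hypotheses of Huber-Kings (crystallinity, vanishing Tamagawa factor, and the irrelevance of $\#H^0$ in the ratio) transfer to the $(k-2)$-twist, with the only borderline case being $k=3$ where $C_{k-2,l}^{\psi,\phi}$ is no longer a genuine negative Tate twist; the cancellation built into the ratio should, however, side-step this degeneracy. Once this is in hand, everything else is a routine transcription of the weight-$k$ argument.
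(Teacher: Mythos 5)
Your proposal is correct and takes essentially the same route as the paper: the paper's proof is just the two displayed equalities (Bloch--Kato ratio equals $L$-ratio, which equals the Euler-factor product), and your argument spells out the same computation in more detail, with the Huber--Kings justification, the unit observations, and the specialisation to $d=M/p$ made explicit. The caveat you raise about $k=3$ is a reasonable flag but is not actually a problem here, since the parity constraint $\psi\phi(-1)=(-1)^k$ forces $\psi\phi^{-1}$ to be odd, so $s=k-2=1$ is still a critical point for $L(s,\psi\phi^{-1})$ and Huber--Kings applies.
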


    \begin{proof}
    This follows by considering the Bloch-Kato quotient of weight $k-2$: \begin{align*}\text{ord}_{\lambda^\prime}\left( \frac{ \#H^1_{\mathcal{P}_{NM}}(\mathbb{Q},C_{k-2,l}^{\psi,\phi})}{\#H^1_{\mathcal{P}_{Nd}}(\mathbb{Q},C_{k-2,l}^{\psi,\phi})}\right) &= \text{ord}_{\lambda^\prime}\left( \frac{L_{\mathcal{P}_{NM}}(k-2, \psi\phi^{-1})}{L_{\mathcal{P}_{Nd}}(k-2, \psi\phi^{-1})} \right)\\ &=\text{ord}_{\lambda^\prime}\left( \frac{\prod_{p \in \mathcal{P}_{M/d}}(\psi(p)-\phi(p)p^{k-2})}{\phi(M/d)(M/d)^{k-2}}\right).\end{align*}
    \end{proof}

    Alternatively, the divisibility condition in the above Proposition implies Conditions (1) and (2) for weight $k-2$. By Theorem \ref{thm:eigenform} there exists a congruence between the eigenvalues of an eigenform $g\in S_{k-2}(\Gamma_0(NM/d),\tilde{\chi})$ and the eigenvalues of $E_{k-2}^{\psi,\phi}$ (at ``good" primes). By a similar argument to earlier, this supplies a non-trivial element $c^{''}\in H^1_{\mathcal{P}_{NM/d}}(\mathbb{Q},C_{k-2,l}^{\psi,\phi})$. Conjecture \ref{conj:general} would let us take $g$ to be a newform, and so we are led to conjecture the following.

    \begin{conj}\label{conj:pnew2}
    Let $\mathcal{S} = \{p\in \mathcal{P}_M\,|\,\text{ord}_{\lambda'}(\psi(p)-\phi(p)p^{k-2})>0\}$. Then the class $c^{''}\in H^1_{\mathcal{P}_{NM/d}}(\mathbb{Q},C_{k-2,l}^{\psi,\phi})$ constructed above is $p$-new for each $p\in \mathcal{S}$.
    \end{conj}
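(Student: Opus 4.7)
The plan is to condition on Conjecture~\ref{conj:general} applied in weight $k-2$ at level $N(M/d)$, then run the local Galois-theoretic argument from the proof of Theorem~\ref{thm:reverse} at each $p\in\mathcal{S}\cap\mathcal{P}_{M/d}$ in order to detect ramification. The hypothesis $\text{ord}_{\lambda'}(\psi(p)-\phi(p)p^{k-2})>0$ for each $p\in\mathcal{P}_{M/d}$ used in Proposition~\ref{prop:BK2} is exactly Condition~(2) for weight $k-2$ at level $N(M/d)$, and Condition~(1) follows from the inherited divisibility $\text{ord}_{\lambda'}(L(1-k,\psi^{-1}\phi))>0$. Granting Conjecture~\ref{conj:general} in this case produces a newform $g\in S_{k-2}^{\text{new}}(\Gamma_0(N(M/d)),\tilde{\chi})$ and a prime $\lambda\mid\lambda'$ of $\mathcal{O}_g[\psi,\phi]$ satisfying $a_q(g)\equiv\psi(q)+\phi(q)q^{k-3}\bmod\lambda$ for all $q\nmid N(M/d)$.

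From $g$, Ribet's construction supplies an invariant lattice whose reduction realises a non-split extension of $\overline{\psi}$ by $\overline{\phi}\chi_l^{k-3}$, and hence the cohomology class $c''\in H^1_{\mathcal{P}_{N(M/d)}}(\mathbb{Q},C_{k-2,l}^{\psi,\phi})$, exactly as in the construction preceding Proposition~\ref{prop:BK}. To show that $c''$ is $p$-new it suffices to prove that its local restriction at $p$ is ramified, i.e.\ has non-trivial image in $H^1(I_p,A_g^{\psi,\phi}[\lambda])$; by definition of $H^1_f(\mathbb{Q}_p,\,\cdot\,)$, this is precisely the obstruction to $c''$ lying in $H^1_{\mathcal{P}_{N(M/d)/p}}(\mathbb{Q},C_{k-2,l}^{\psi,\phi})$.

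This is where the newness of $g$ at $p$ becomes essential. For a $p$-newform of square-free level, standard local Langlands (\cite[Proposition 2.8]{loeffler_weinstein_2011}, as used already in Theorem~\ref{thm:reverse}) shows that $\rho_{g,\lambda}\mid_{G_{\mathbb{Q}_p}}$ is a twist of Steinberg,
\begin{equation*}
    \rho_{g,\lambda}\mid_{G_{\mathbb{Q}_p}} \;\sim\; \begin{pmatrix} \mu\chi_l^{k-3} & \ast \\ 0 & \mu \end{pmatrix},
\end{equation*}
with the upper-right entry \emph{ramified}. Matching with the reducible reduction $\overline{\psi}\oplus\overline{\phi}\chi_l^{k-3}$ and the case~(A)/(B) dichotomy from Theorem~\ref{thm:reverse} applied in weight $k-2$ (where case~(A) now reads $\psi(p)\equiv\phi(p)p^{k-2}\bmod\lambda$ and case~(B) reads $\psi(p)\equiv\phi(p)p^{k-4}\bmod\lambda$), the condition $p\in\mathcal{S}$ places us in case~(A), so the non-split extension defining $c''$ inherits the Steinberg ramification. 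The local restriction of $c''$ to $I_p$ is therefore non-trivial, forcing $c''\notin H^1_f(\mathbb{Q}_p,A_g^{\psi,\phi}[\lambda])$, which is to say that $c''$ is $p$-new.

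The principal obstacle is Conjecture~\ref{conj:general} itself, on which the entire argument rests: without the upgrade from eigenform to newform, $g$ may be $p$-old, $\rho_{g,\lambda}\mid_{G_{\mathbb{Q}_p}}$ may be unramified, and the extension class $c''$ will then fail to detect $p$-newness. A secondary subtlety arises when the case~(A) and case~(B) divisibilities $\text{ord}_{\lambda'}(\psi(p)-\phi(p)p^{k-2})>0$ and $\text{ord}_{\lambda'}(\psi(p)-\phi(p)p^{k-4})>0$ occur simultaneously at a single prime $p$: one must check that Ribet's choice of lattice actually records the case~(A) ramification rather than the case~(B) alternative, which will likely require a finer analysis of the lattice at $p$ or an $L$-invariant-type argument.
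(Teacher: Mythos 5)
The statement you are trying to prove is presented in the paper as a \emph{conjecture}, not a theorem; the paper offers no proof, only a heuristic motivation in the paragraph preceding it (Conjecture~\ref{conj:general} supplies a newform, which should supply a ``new'' Selmer element), together with the remark that for $M=p$ prime Theorem~\ref{thm:prime} implies the statement, while for general square-free $M$ little is known. Your conditional argument follows exactly the paper's intended heuristic: assume Conjecture~\ref{conj:general} in weight $k-2$ and level $NM/d$, produce a newform $g$, and try to read off local ramification at $p$ from the Steinberg structure of $\rho_{g,\lambda}\vert_{G_{\mathbb{Q}_p}}$.

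The genuine gap is the inference from ``$\rho_{g,\lambda}\vert_{G_{\mathbb{Q}_p}}$ is a ramified Steinberg extension in characteristic zero'' to ``the mod-$\lambda$ extension class $c''$ restricts to a ramified class at $p$''. Ribet's lemma guarantees a choice of $G_{\mathbb{Q}}$-stable lattice whose reduction is \emph{globally} non-split, but it gives no control whatsoever over the local restriction at $p$: different stable lattices can (and generically do) give different local classes, and the lemma does not promise that the chosen lattice ``sees'' the monodromy at $p$. Indeed the whole content of level-lowering (Ribet, Diamond) is that the reduction of a Steinberg representation can become unramified mod $\lambda$ under divisibility conditions such as $\mathrm{ord}_{\lambda'}(\psi(p)-\phi(p)p^{k-4})>0$ --- exactly the case-(B) condition you mention. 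You flag this as a ``secondary subtlety'' occurring only when both divisibilities hold simultaneously, but it is in fact the central point: without an argument that the Ribet lattice can be chosen so that its reduction is locally non-split at $p$, your final sentence ``the non-split extension defining $c''$ inherits the Steinberg ramification'' is an assertion, not a deduction. This unfilled step is precisely why the paper leaves the statement as a conjecture even under the assumption of Conjecture~\ref{conj:general}, and a complete proof would require a finer lattice argument at $p$ (or an independent construction of the local class, e.g.\ via an $L$-invariant or a Kummer-theoretic description of the monodromy).

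A minor point: since $c''$ lives in $H^1_{\mathcal{P}_{NM/d}}$ and $p$-newness only makes sense for $p \in \mathcal{P}_{M/d}$, you were right to restrict attention to $p \in \mathcal{S} \cap \mathcal{P}_{M/d}$; the paper's notation $\mathcal{S}\subseteq\mathcal{P}_M$ is slightly looser than the conjecture strictly requires.
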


    \noindent In summary, we have considered the two different ways in which Conditions (1) and (2) can hold. By Bloch-Kato, each implies the existence of a collection ``$p$-new" elements in a certain Bloch-Kato Selmer group (i.e.\ Propositions \ref{prop:BK1} and \ref{prop:BK2}). Further, we expect that both collections of $p$-new elements should be explained by the existence of two ``new" elements, each arising from a newform congruence implied by Conjecture \ref{conj:general} (i.e.\ Conjectures \ref{conj:pnew} and \ref{conj:pnew2}).  In the case of $M=p$ prime, we remark that Theorem \ref{thm:prime} implies the truth of these conjectures, but for more general square-free $M$, little seems to be known.

\section{Examples}
\label{section:eg}

Below, we give brief computational examples of Conjecture \ref{conj:general}, using data provided by the LMFDB database \cite{lmfdb}. First we consider examples demonstrating Theorem \ref{thm:prime}. In each of the examples below, the prime $\lambda$ satisfies the conditions of Case (A) in the proof of Theorem \ref{thm:reverse}.

\begin{eg}
Take $N=5$, $M=2$ and $k=8$. Also let $\psi = \mathds{1}$ and $\phi = \left(\frac{\cdot}{5}\right)$ (so that $\chi = \phi$). The only prime $\lambda'$ of $\mathbb{Z}[\psi,\phi] = \mathbb{Z}$ satisfying the conditions of Theorem \ref{thm:prime} is $\lambda' = 257$. 

\noindent Indeed, the newform $f \in S_8^{\text{new}}(\Gamma_0(10), \tilde{\chi})$ with LMFDB label [10.8.b.a] satisfies the congruence \[a_q(f) \equiv 1+\left(\frac{q}{5}\right)q^{7} \bmod \lambda\] for all $q\neq 2,5$ and for some fixed prime $\lambda\mid \lambda'$ of $\mathcal{O}_f[\psi, \phi] = \mathcal{O}_f$ (the ring of integers of the quartic field generated by a root of $x^4-15x^2+64$).
\end{eg}

\begin{eg}
Take $N=7$, $M=2$ and $k=7$. Also let $\psi = \mathds{1}$ and $\phi$ be the primitive mod $7$ character satisfying $\phi(3) = \zeta_6$ (so that $\chi = \phi$). The only prime $\lambda'$ of $\mathbb{Z}[\psi,\phi] = \mathbb{Z}[\zeta_6]$ satisfying the conditions of Theorem \ref{thm:eigenform} is $\lambda' = \langle 337, \zeta_6+128\rangle$ (lying above $l = 337$). 

\noindent Indeed, the newform $f \in S_7^{\text{new}}(\Gamma_0(14), \tilde{\chi})$ with LMFDB label [14.7.d.a] satisfies the congruence \[a_q(f) \equiv 1 + \phi(q)q^6 \bmod \lambda\] for all $q\neq 2,7$ and for some fixed prime $\lambda\mid \lambda'$ of $\mathcal{O}_f[\psi,\phi] = \mathcal{O}_f[\zeta_6]$ (here $\mathcal{O}_f$ is the ring of integers of a degree $8$ number field).
\end{eg}

We finish with an example demonstrating Conjecture \ref{conj:general} in a case where $M$ is composite.

\begin{eg}
Take $N=7, M=6$ and $k=6$. Let $\psi = \mathds{1}$ and $\phi$ be the primitive mod $7$ character satisfying $\phi(3) = \zeta_3^2$ (so that $\chi = \phi$). The only prime $\lambda'$ of $\mathbb{Z}[\psi,\phi] = \mathbb{Z}[\zeta_6]$ satisfying the conditions of Theorem \ref{thm:eigenform} is $\lambda' = \langle 73, \zeta_6+64\rangle$ (lying above $l = 73$). 

\noindent Indeed, the newform $f\in S_6^{\text{new}}(\Gamma_0(42),\tilde{\chi})$ with LMFDB label [42.6.e.c] satisfies the congruence: \[a_q(f) \equiv 1 + \phi(q)q^6\bmod \lambda\] for all primes $q\neq 2,7$ and for a fixed prime $\lambda\mid \lambda'$ of $\mathcal{O}_{f}[\psi,\phi] = \mathcal{O}_{f}[\zeta_6]$ (here $\mathcal{O}_f$ is the ring of integers of a degree $4$ number field).
\end{eg}

\bibliography{References}
\bibliographystyle{plain}

\end{document}